\DeclareMathOperator*{\card}{card}
\newcommand{\ds}{\displaystyle}
\newcommand{\nexto}{\kern -0.54em}
\newcommand{\dR}{{\rm {I\ \nexto R}}}
\newcommand{\dZ}{{\cal Z \kern -0.7em Z}}
\newcommand{\dC}{{\rm\hbox{C \kern-0.8em\raise0.2ex\hbox{\vrule
height5.4pt width0.7pt}}}}
\newcommand{\dQ}{{\rm\hbox{Q \kern-0.85em\raise0.25ex\hbox{\vrule
height5.4pt width0.7pt}}}}
\newcommand{\proofbox}{\hspace{\fill}{$\Box$}}
\newtheorem{lemma}{Lemma}
\newtheorem{theorem}{Theorem}
\newtheorem{corollary}{Corollary}
\newtheorem{remark}{Remark}
\newenvironment{proof}{Proof.}{\proofbox}
\begin{document}

\author{Authors}

\author{Markus Hegland\footnote{Mathematical Sciences Institute, The Australian National University, Acton, ACT 2601, 
  Australia. E-mail: markus.hegland@anu.edu.au}
\and
C. Yal{\c c}{\i}n Kaya\footnote{Mathematics, UniSA STEM, University of South Australia, Mawson Lakes, S.A. 5095, 
  Australia. E-mail: yalcin.kaya@unisa.edu.au\,.}
}

\title{\vspace{-10mm}\bf Probability Density Estimation via Optimal Control}

\date{1 October 2025}

\maketitle

\thispagestyle{empty}

\begin{abstract}
{\sf We employ optimal control theory to study the problem of estimating the probability density function from a data set originating from an unknown probability distribution.  The original variational problem is reformulated as a multi-stage optimal control problem and the associated maximum principle, or conditions of optimality, is reduced to a two-point boundary-value problem with interior conditions.  A numerical scheme is proposed to solve the discretization of this problem.  Estimates of density functions for synthetic and real data are computed using the proposed approach.  The real data come from the Old Faithful geyser and the speeds of a group of galaxies. Comparisons are made with the popular statistics software R.
}
\end{abstract}

\begin{verse}
{\em Key words}\/: {\sf Probability density estimation, Multiprocess optimal control, Two-point boundary-value problem, Discretization, Numerical methods.}
\end{verse}

\begin{verse}
{\em 2020 Mathematics Subject Classification}\/: Primary 49M05, 62G07,\ \ Secondary 62-08, 49M25, 49K30
\end{verse}

\pagestyle{myheadings}
\markboth{}{\sf\scriptsize Density Estimation via Optimal Control\ \ by M.~Hegland and C.~Y.~Kaya}

\section{Introduction}

Suppose that $t_i$, $i=1,\ldots,n$, are points sampled from an unknown probability distribution.  The process of finding an approximation of the probability density function of the distribution, from which these sample data points come, is referred to as {\em density estimation}.  For a comprehensive review of existing methods, in particular for {\em nonparametric density estimation}, which is the focus of interest of this paper, we refer the reader to~\cite{GriHeg2010, Shvartsman2010, Silverman1986}.  In what follows, we provide a brief review of some of these approaches to furnish our context.

\subsection{Context and relevance}

Obviously, the simplest density estimation is to construct a {\em histogram} of the set of data points, $\{t_1,\ldots,t_n\}$.  Histograms have been widely used as a visual representation of the distribution of quantitative data since the 18th century~\cite{Ioannidis2003}.  However, through a histogram, one can only get a discrete, and usually poor, approximation of the underlying probability density function (after normalizing the total area of the ``rectangles'' in the histogram to 1, of course).  Any continuous function fitted, for example to points chosen on the upper edges of the rectangles in a histogram, is at best rugged and oscillatory. So, while an advantage of constructing a histogram is its simplicity, the two main disadvantages are (i) the discontinuity, or the ``ruggedness'', of the estimated density function and (ii) the difficulty of selecting an appropriate bin width.

A rather more modern approach to density estimation is the {\em kernel method}, which was introduced by Rosenblatt~\cite{Rosenblatt1956} in 1956 and has since been extensively investigated in numerous studies.  With this method, an estimator is computed using the sum of a kernel (function) expressed at each data point.  The expression for the estimator also involves a parameter, called the ``bandwidth'', adjusted to obtain a smoother appearance of the graph of the estimated density function.  Although the kernel method can achieve the continuity of the estimated function, the selection of the bandwidth to obtain a smooth-looking density function without compromising the accuracy of the estimate remains a challenging issue.  Given these pros and cons, the kernel method has become one of the most common approaches to nonparametric density estimation; see, for example, the popular statistical computing and graphics software~R~\cite{R, HotEve2014}. 

Yet another common approach is {\em maximum likelihood estimation} (MLE), which is typically used for {\em parametric density estimation}, where the parameters of a known form of density function (for example, the mean and variance of the normal distribution) are estimated, by maximizing (in some sense) the likelihood (or the probability) of the outcomes at the sampled data points $t_i$, $i=1,\ldots,n$; see, for example, \cite[Ch.~5]{Rossi2018}.   

For our focus of interest, i.e., for nonparametric density estimation, the MLE problem can be naively written as finding an estimate $f:[0,1]\to\dR$ that solves the maximization problem
\begin{equation}  \label{prob:MLE}
\max\ f(t_1)\,f(t_2)\,\cdots\, f(t_n)\,,
\end{equation}
subject to the {\em normality} constraint
\begin{equation}  \label{constr:normality}
\int_0^1 f(t)\,dt = 1\,.
\end{equation}
Taking the logarithm, $f$ equivalently solves
\begin{equation}  \label{prob:log_MLE}
\max\ \sum_{i = 1}^n \ln f(t_i)\,,
\end{equation}
subject to~\eqref{constr:normality}.  The so-called {\em $\log$-likelihood} expression in \eqref{prob:log_MLE} is commonly used for convenience in subsequent calculations in the literature.  However, without a specified or required form of the function $f$, there is not even a piecewise-continuous solution to \eqref{prob:log_MLE}.  As a remedy to this intractability, in 1971, Good and Gaskins~\cite{GooGas1971} considered adding a ``{\em nonparametric roughness penalty}'', or {\em regularization} terms involving the squared $L^2$-norms of derivatives $\zeta'$ and $\zeta''$, or in an equivalent notation $\dot{\zeta}$ and $\ddot{\zeta}$, of the function $\zeta := \sqrt{f}$, to the functional in~\eqref{prob:log_MLE}.  Then the regularized problem must be solved for the new function $\zeta$ instead of $f$, whose use avoids the constraint $f(t) \ge 0$.  In other words, they posed the following problem.
\begin{equation}  \label{prob:Good&Gaskins}
\min_{\gamma(\cdot)}\ -\sum_{i=1}^n \ln\zeta(t_i) 
+ \alpha_1 \int_0^1 \dot{\zeta}^2(t)\,dt 
+ \alpha_2 \int_0^1 \ddot{\zeta}^2(t)\,dt\,,
\end{equation}
subject to~\eqref{constr:normality}, where the constants $\alpha_1\ge 0$ and $\alpha_2\ge 0$, such that $\alpha_1 + \alpha_2 > 0$, are the so-called {\em penalty parameters}.  Good and Gaskins~\cite{GooGas1971} also presented a numerical scheme using the Rayleigh--Ritz method to solve~\eqref{prob:Good&Gaskins} subject to~\eqref{constr:normality} and illustrated their approach through examples.

More recently, Griebel and Hegland~\cite{GriHeg2010} have considered the problem of estimating a multivariate probability density function.  Here we cite the univariate version of the problem posed in~\cite{GriHeg2010} as follows.  Consider the problem of estimating the probability density function $f:[0,1]\to\dR$ by the function $f_v:[0,1]\to\dR$ such that
\begin{equation}  \label{fv}
f_v(t) := \frac{e^{v(t)}}{\ds\int_0^1 e^{v(\tau)}\,d\tau}\,,
\end{equation}
where $v:[0,1]\to\dR$ solves the problem
\[
\mbox{(P)\ \ }\min_{v(\cdot)}\
-\frac{1}{n} \sum_{i=1}^n v(t_i) 
+ \log \int_0^1 e^{v(t)}\,dt 
+ \frac{\alpha}{n}\,\int_0^1\dot{v}^2(t)\,dt 
+ \frac{\alpha\,\beta^2}{n} \int_0^1 (v(t) - w(t))^2\,dt\,.
\]
The function $w:[0,1]\to\dR$ above is continuous and refers to a known distribution.  For example, for the normal distribution, $w(t) = -(t-\mu)^2/(2\,\sigma^2)$, where $\mu$ is the mean and $\sigma^2$ is the variance.  In fact, \cite{GriHeg2010} considers the above problem with $w(t)=0$ for all $t\in[0,1]$, but we have incorporated $w(t)$ for a slightly more general setting.  

Minimization of the first two terms in Problem~(P) corresponds to nothing but maximum $\log$-likelihood.  We note that the constant $\alpha$ is the {\em smoothness parameter}, which is reminiscent of $\alpha_1$ in~\eqref{prob:Good&Gaskins}.  When $w=0$ the last two terms constitute a suitable (weighted) norm for a Sobolev space of functions and serve as regularization terms.  Minimization of the last term has to do with the ``structure'' (or the ``flatness'' in the case when $w = 0$) of the distribution, and so $\beta$ is referred to as the {\em structure parameter}.  With an appropriate choice of $\beta$, this last term ensures that the estimated density will not be too far from the density of some known distribution.  Problem~(P) is referred to as the {\em penalized maximum $\log$-likelihood} problem.

Furthermore, Griebel and Hegland~\cite{GriHeg2010} proposed a Newton--Galerkin method to solve Problem~(P) numerically and illustrated the method using numerical experiments utilizing various synthetic and real data sets.

\subsection{Contribution}

Problem~(P) is a calculus of variations problem and can be transformed into an optimal control problem, although it has a term that involves ``intermediate costs'' rather than an initial or a terminal cost, making it nonstandard.  In the present paper, we study an optimal control formulation of Problem~(P) using the maximum principle for problems with intermediate costs~\cite{AugMau2000,ClaVin1989,DmiKag2011}, and propose numerical methods to obtain approximate solutions to Problem~(P).  Our contribution can be described in more detail as follows.
\begin{itemize}
\item After reformulating Problem~(P) as a multiprocess or multistage optimal control problem, we present our main result in Theorem~\ref{theo:tpbvp}: the pertaining maximum principle, or necessary conditions of optimality, reduces to a two-point boundary-value problem described in the variable $v$ (defined in~\eqref{fv}) and an auxiliary variable, with jumps in the value of $v$ at data points $t_i$.
\item Theorem~\ref{theo:reduction} presents an auxiliary result that states an equivalent ODE for $v$ in reduced order.
\item Since $v$ has jumps at data points, we describe a novel discretization scheme, or partitioning, taking this into account, which makes use of either the Euler method or the trapezoidal rule.  
\item To solve the large-scale equation system resulting from discretization, we use the AMPL--Knitro suite~\cite{AMPL,Knitro}, where AMPL is an optimization modelling language that employs Knitro as the solver.  We illustrate our approach with synthetic data from normal distribution, as well as practical (or real) data from the Old Faithful geyser and a group of galaxies.
\item We employ the popular statistical and graphics software R~\cite{R} to make comparisons with our approach and conclude that the density estimates obtained by our approach are at least as good as those obtained by R.
\end{itemize}

It is worth mentioning that Shvartsman~\cite{Shvartsman2010} studied earlier an optimal control formulation of the problem in~\eqref{prob:log_MLE} subject to~\eqref{constr:normality} and the ``modifications'' that $f$ is Lipschitzian with a known Lipschitz constant and that $f$ is nonnegative.  The Lipschitzianity of $f$ in the modified problem in~\cite{Shvartsman2010}, posed as the constraint $|f(t)| \le \ell$, for all $t\in[0,1]$, with the specified Lipschitz constant $\ell$, has the regularization effect that results in the existence of a solution.  As can be seen, this problem is markedly different from Problem~(P) in the way regularization is achieved.

The main concern in~\cite{Shvartsman2010} is to present convergence results (asymptotic with the data size $n$).  Although some structure of the solutions are elaborated, such as the bang--bang nature of the optimal control, or the seesaw appearance of the graph of the estimated density function, no computational method is proposed in~\cite{Shvartsman2010} to implement these results.  We stress that especially when $n$ is not large, the seesaw appearance of the density function is not desirable either for what we want to achieve in this paper.  Moreover, the approach in~\cite{Shvartsman2010} requires good knowledge of the Lipschitz constant $\ell$ which may not be so straightforward to estimate.  Therefore, we rather focus on Problem~(P) which produces density estimates with ``smoother'' graphs.

The paper is organized as follows.  In Section~\ref{sec:formulation}, we formulate Problem~(P) as a multistage optimal control problem.  We apply a maximum principle to this optimal control problem in Section~\ref{sec:maxprinciple} and establish the normality of the problem in Lemma~\ref{lem:normal}. The main results of the paper, namely Theorems~\ref{theo:tpbvp} and \ref{theo:reduction} are presented in Section~\ref{sec:results}.  In Section~\ref{sec:numerical}, the discretization scheme and numerical experiments for synthetic and real data are provided.  Finally, Section~\ref{conclusion} presents concluding remarks and comments for future work.

\section{Optimal Control Formulation}
\label{sec:formulation}

Problem~(P) is unconstrained, which is preferable as a general variational problem.  On the other hand, optimal control theory can handle certain constraints with ease.  Therefore, we pose the following natural constraint, which we express in our optimal control framework as a terminal state constraint.
\begin{equation} \label{constr1}
\int_0^1 e^{v(\tau)}\,d\tau = 1\,.
\end{equation}
Now, obviously,
\begin{equation}  \label{fv2}
f_v(t) = e^{v(t)}\,,
\end{equation}
which is simpler.

Let $x_1(t) := \ds\int_0^t e^{x_2(t)}\,dt$, where $x_2(t) := v(t)$.  We refer to the functions $x_1:[0,1]\to\dR$ and $x_2:[0,1]\to\dR$ as the {\em state variables}. Let $u := \dot{v}$. We refer to the function $u:[0,1]\to\dR$ as the {\em control variable}.  Now, Problem~(P), along with the constraint \eqref{constr1}, can be written equivalently as the optimal control problem
\[
\mbox{(OCP) }\left\{\begin{array}{rl} 
\ds\min & \ \ \ds - \frac{1}{\alpha} \sum_{i=1}^n x_2(t_i) +
\frac{1}{2}\int_0^1 \left[\beta^2\,(x_2(t) - w(t))^2 +
  u^2(t)\right]\,dt \\[4mm] 
\mbox{subject to} & \ \ \dot x_1(t) = e^{x_2(t)}\,,\quad\ x_1(0) =
0\,,\quad x_1(1) = 1\,, \\[2mm]
& \ \ \dot x_2(t) = u(t)\,,
\end{array} \right.
\]
where $\dot{x}_i := dx_i/dt$, $i=1,2$.  The first term in the objective functional in Problem~(OCP) involves values of $x_2$ at discrete points in the interior of the time horizon $[0,1]$, which makes the optimal control problem nonstandard.  We note that $x_1$ is the {\em cumulative density function} of the distribution.

Suppose that $t_i\neq t_j$ for $i\neq j$, $i,j=1,\ldots,n$.  Without loss of generality, order the sample points $t_i$ such that
\[
0 = t_0 < t_1 < t_2 < \cdots < t_n < t_{n+1} = 1\,.
\]
Now, Problem~(OCP) can be reformulated as a so-called multiprocess, or multistage, optimal control problem for which the necessary conditions of optimality can be derived following the theory and methodology provided in~\cite{AugMau2000,ClaVin1989,DmiKag2011}.  In the reformulation, the process over an interval $[t_{i-1},t_i]$, is referred to as {\em stage $i$}, $i = 1,\ldots,n+1$.  We point out the papers \cite{KayNoa2013,Kaya2019}, where somewhat similar reformulations were employed for interpolation problems, in writing the necessary conditions of optimality.

Clarke and Vinter give a maximum principle for multiprocess (or multistage) optimal control problems in \cite{ClaVin1989} involving very general dynamical systems, including systems which are not differentiable, for which the transversality conditions are presented by means of generalized derivatives and normal cones.  Augustin and Maurer transform in \cite{AugMau2000} the multistage control problem for a special class of systems (including the class of system we have in this paper) into a single-stage one by means of a standard rescaling of the stage durations (defined below).  This allows the transversality conditions to be described in a rather more convenient way.

Define a new time variable $s$ in each stage in terms of $t$ as follows:
\[
s := (t-t_{i-1}) / (t_i - t_{i-1})\,,\quad\mbox{for}\ \
t\in[t_{i-1},t_{i}]\,, 
\]
and all $i = 1,\ldots,n+1$.  With this definition, each stage
$[t_{i-1},t_i]$ is re-scaled as $[0,1]$ in the new time variable~$s$.
Let
\[
x_j^{[i]}(s) := x_j(t)\quad\mbox{and}\quad u^{[i]}(s) :=
u(t)\quad\mbox{for}\ \ s\in[0,1],\ t\in[t_{i-1},t_{i}]\,,
\]
$j = 1,2$, and all $i = 1,\ldots,n+1$.  Here, $x_j^{[i]}$ and $u^{[i]}$ denote the values of the state and control variables $x_j$ and $u$, respectively, in stage $i$.  In addition to the ``interior'' point objective function terms in (OCP), with the usage of stages, one needs to pose constraints to ensure continuity of the state variables at the junctions of two successive stages; namely one should require
\[
x_j^{[i]}(1) = x_j^{[i+1]}(0)\,,\quad\mbox{for} \ \ j = 1,2\,,
\]
and all $i = 1,\ldots,n+1$.

\section{A Maximum Principle for Density Estimation}
\label{sec:maxprinciple}

We will make use of both \cite[Section~4]{AugMau2000} and \cite[Theorem~3.1]{ClaVin1989} to write the necessary conditions of optimality for Problem~(OCP).  We re-iterate that the maximum principle for optimal multiprocesses from these references has also been implemented in \cite{KayNoa2013, Kaya2019} for interpolation problems.

Define the Hamiltonian function in the $i$th stage as
\[
H^{[i]}(x_1^{[i]},x_2^{[i]},u^{[i]},\lambda_0,\lambda_1^{[i]},\lambda_2^{[i]},s)
:= \frac{1}{2}\,\lambda_0\,\left(\beta^2\,(x_2^{[i]} - w(s))^2 +
  (u^{[i]})^2\right) + \lambda_1^{[i]}\, e^{x_2^{[i]}} +
\lambda_2^{[i]}\, u^{[i]}\,, 
\]
where $\lambda_0$ is a real scalar (multiplier) parameter and
$\lambda_1^{[i]},\lambda_2^{[i]}:[0,1]\rightarrow\dR$ are the
adjoint variables (multipliers) in the $i$th stage.  Let
\[
H^{[i]}[s] :=
H^{[i]}(x_1^{[i]}(s),x_2^{[i]}(s),u^{[i]}(s),\lambda_0,\lambda_1^{[i]}(s),\lambda_2^{[i]}(s),s)\,.
\]

Suppose that $x_1,x_2\in W^{1,\infty}(0,1;\dR)$, $u\in
L^\infty(0,1;\dR)$, are optimal for Problem~(OCP).  Then there exist
a number $\lambda_0\ge0$, functions $\lambda_1^{[i]},\lambda_2^{[i]}\in
W^{1,\infty}(0,1;\dR)$, such that $\lambda^{[i]}(t)=(\lambda_0,
\lambda_1^{[i]}(s), \lambda_2^{[i]}(s))\neq\bf0$, $i= 1,\ldots,n+1$,
for every $s\in[0,1]$, and the following conditions hold, in addition
to the constraints given in Problem~(OCP).
\begin{subequations}
\begin{eqnarray}
&& \dot\lambda_1^{[i]}(s) = -H^{[i]}_{x_1^{[i]}}[s] = 0\,,\quad
i = 1,\ldots,n+1\,, \label{adjoint1} \\[1mm]
&& \dot\lambda_2^{[i]}(s) = -H^{[i]}_{x_2^{[i]}}[s] =
-\lambda_0\,\beta^2\,(x_2^{[i]}(s) - w(s)) - \lambda_1^{[i]}\,
e^{x_2^{[i]}(s)}\,,\quad i = 1,\ldots,n+1\,, 
  \label{adjoint2} \\[1mm]
&& \lambda_1^{[i]}(1) = \lambda_1^{[i+1]}(0)\,,\quad
  \ i = 1,\ldots,n\,, \label{contin} \\[1mm]
&& \lambda_2^{[i]}(0)=0\,,\ \lambda_2^{[i]}(1)=0\,,\quad \ i =
1,n\,, \label{bc} \\[1mm]
&& \lambda_2^{[i+1]}(0) = \lambda_2^{[i]}(1) + 1/\alpha\,,\quad
  i=1,\ldots,n\,,
  \label{jump} \\[1mm]
&& 0 =
H^{[i]}_{u^{[i]}}(x_1^{[i]}(s),x_2^{[i]}(s),u^{[i]},\lambda_0,\lambda_1^{[i]}(s),\lambda_2^{[i]}(s))\,,\quad
i = 1,\ldots,n+1\,.  \label{controla}
\end{eqnarray}
\end{subequations}
The notation $H^{[i]}_{x_j^{[i]}}$ denotes the partial derivative of
$H^{[i]}$ with respect to $x_j^{[i]}$, $j=1,2$, and
$H^{[i]}_{u^{[i]}}$ the partial derivative of $H^{[i]}$ with respect
to $u^{[i]}$.  Conditions~\eqref{adjoint1} and \eqref{contin} imply
that the value of the adjoint variable $\lambda_1^{[i]}$ in each stage
is the same constant. Condition \eqref{jump}, on the other hand,
asserts a jump of a fixed amount of $1/\alpha$ in the value of
$\lambda_2$, at the junctions of the stages.

For a neater appearance, we will re-write Conditions~\eqref{adjoint1}--\eqref{controla} and elaborate them further by
means of the {\em general} state, control, and adjoint variables. For
this purpose, define the general adjoint variables $\lambda_1(t)$ and $\lambda_2(t)$ formed by concatenating the stage adjoint variables, as
follows.
\[
\lambda_j(t) := \lambda_j^{[i]}(s)\,,\quad  t = t_{i-1} + s\,\tau_i,\quad
s\in[0,1]\,,\quad \tau_i := t_i - t_{i-1}\,,\quad i = 1,\ldots,n+1\,,\ \ j = 1,2\,.
\]
The general state and control variables are defined in a similar
way.  Conditions \eqref{adjoint1}--\eqref{controla}, along with
the state equations and constraints, can now be neatly re-written as
follows.
\begin{subequations}
\begin{eqnarray}
&& \dot x_1(t) = e^{x_2(t)}\,,\quad\ x_1(0) = 0\,,\quad x_1(1) =
1\,,  \label{state1} 
\\[1mm]
&& \dot x_2(t) = u(t)\,,  \label{state2} \\[1mm]
&& \lambda_1(t) = \gamma\,,  \label{adjoint1b} \\[1mm] 
&& \dot\lambda_2(t) = -\lambda_0\,\beta^2\,(x_2(t) - w(t)) -
\gamma\,e^{x_2(t)}\,, \mbox{ for a.e. } t\in[0,1]\,, \nonumber
\\[1mm]
&& \hspace*{20mm} \lambda_2(0) = 0\,,\
  \lambda_2(t_i^+) = \lambda_2(t_i^-) + \frac{1}{\alpha}\,,\
  \lambda_2(1) = 0,  \label{adjoint2b} \\[1mm]
&& \lambda_0\,u(t) = -\lambda_2(t)\,,  \label{controlb}
\end{eqnarray}
\end{subequations}
where $\gamma$ is an unknown constant.

The problems which result in $\lambda_0 = 0$ are called {\em abnormal} in the optimal control theory literature, for which the necessary conditions in \eqref{state1}--\eqref{controlb} are independent of the objective functional and therefore not fully informative.  The problems that result in $\lambda_0 > 0$ are referred to as {\em normal}.  Lemma~\ref{lem:normal} below asserts that Problem~(OCP) is normal.
\begin{lemma}[Normality]  \label{lem:normal} 
One has that $\lambda_0 > 0$, i.e., that Problem~(OCP) is normal.  In particular, one can take $\lambda_0 = 1$, and so the optimal control can be written as $u(t) = -\lambda_2(t)$.
\end{lemma}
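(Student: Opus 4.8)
\emph{Proof plan.} The plan is to proceed by contradiction, assuming abnormality $\lambda_0 = 0$ and extracting an inconsistency directly from the optimality system \eqref{state1}--\eqref{controlb}.

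First I would substitute $\lambda_0 = 0$ into the stationarity condition \eqref{controlb}: the identity $\lambda_0\,u(t) = -\lambda_2(t)$ then reduces to $\lambda_2(t) = 0$ for a.e.\ $t \in [0,1]$. Since each stagewise adjoint $\lambda_2^{[i]}$ lies in $W^{1,\infty}(0,1;\dR)$, the concatenated adjoint $\lambda_2$ is continuous on each interval $(t_{i-1},t_i)$ and possesses one-sided limits at the nodes $t_i$; consequently $\lambda_2 \equiv 0$ throughout, and in particular $\lambda_2(t_i^-) = \lambda_2(t_i^+) = 0$ for every data point $t_i$.

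Next I would invoke the interior jump condition in \eqref{adjoint2b}, namely $\lambda_2(t_i^+) = \lambda_2(t_i^-) + 1/\alpha$. With both one-sided values equal to zero, this forces $1/\alpha = 0$, which is impossible because $\alpha > 0$ (and there is at least one data point, $n \ge 1$). This contradiction rules out $\lambda_0 = 0$, so $\lambda_0 > 0$. To conclude, I would use the standard positive homogeneity of the maximum-principle relations: dividing the multiplier tuple $(\lambda_0, \gamma, \lambda_2(\cdot))$ by $\lambda_0 > 0$ leaves every equation in \eqref{state1}--\eqref{controlb}, as well as the nontriviality requirement, intact, so we may normalize $\lambda_0 = 1$, whereupon \eqref{controlb} becomes $u(t) = -\lambda_2(t)$.

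I do not expect a serious obstacle here; the one point needing care is the regularity bookkeeping — that the a.e.\ identity $\lambda_2 \equiv 0$ genuinely transfers to the one-sided limits $\lambda_2(t_i^\pm)$ appearing in the jump condition, which is precisely what the $W^{1,\infty}$ smoothness of the stage adjoints delivers. If one preferred to avoid the jump condition altogether, an alternative is to note that $\lambda_2 \equiv 0$ gives $\dot\lambda_2 \equiv 0$, so the adjoint ODE in \eqref{adjoint2b} with $\lambda_0 = 0$ yields $\gamma\,e^{x_2(t)} = 0$ and hence $\gamma = 0$; then the whole tuple $(\lambda_0,\gamma,\lambda_2)$ vanishes identically, contradicting the nontriviality condition $\lambda^{[i]}(s) \neq \mathbf{0}$.
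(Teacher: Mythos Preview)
Your proposal is correct, and in fact your \emph{alternative} argument---deduce $\gamma=0$ from the adjoint ODE once $\lambda_2\equiv 0$, and then invoke the nontriviality condition $(\lambda_0,\lambda_1,\lambda_2)\neq\mathbf{0}$---is precisely the route the paper takes. Your \emph{primary} argument, which extracts the contradiction directly from the interior jump $\lambda_2(t_i^+)=\lambda_2(t_i^-)+1/\alpha$, is a legitimate shortcut given the conditions exactly as written in \eqref{adjoint2b}. One caveat worth flagging: in most formulations of the multiprocess maximum principle the jump generated by an intermediate cost term carries the abnormal multiplier, i.e.\ it reads $\lambda_2(t_i^+)=\lambda_2(t_i^-)+\lambda_0/\alpha$, so that under $\lambda_0=0$ the jump vanishes and no contradiction arises from it alone. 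Your alternative---and the paper's---argument via $\gamma=0$ and nontriviality is robust to this point, which is presumably why the paper chose it.
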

\begin{proof}
Suppose that $\lambda_0 = 0$.  Then \eqref{controlb} implies that $\lambda_2(t) = 0$ for a.e. $t\in[0,1]$, and thus, by the differential equation in \eqref{adjoint2b}, $\gamma=0$.  Therefore,
one gets $\lambda^{[i]}(t)=(\lambda_0, \lambda_1^{[i]}(s), \lambda_2^{[i]}(s)) = \bf0$, $i= 1,\ldots,n+1$, which is not allowed by the maximum principle.  As a result, $\lambda_0 > 0$.  Any positive scalar multiple of $\lambda_0$ is also a solution.  Therefore, without loss of generality, one can set $\lambda_0 = 1$, and so write the optimal control from \eqref{controlb} as $u(t) = -\lambda_2(t)$.
\end{proof}

\begin{remark}[Jumps in Optimal Control] \rm
With $\lambda_0 = 1$ by Lemma~\ref{lem:normal}, we note that the jump condition in~\eqref{adjoint2b} and Equation~\eqref{controlb} implies that the optimal control $u$ has jumps at $t_i$, namely that $u(t_i^+)=u(t_i^-)-1/\alpha$, $i=1,\ldots,n$.
\proofbox
\end{remark}

\section{Main Results}
\label{sec:results}

The ultimate result of the paper is furnished by the theorem below, which presents a two-point boundary-value problem with interior jump conditions that is required to be solved by the function $v$ in~\eqref{fv}.
\begin{theorem}[Necessary Condition of Optimality]  \label{theo:tpbvp} 
If the function $v$ in \eqref{fv} solves Problem~{\em (P)} then it solves the two-point boundary-value problem (with interior jump conditions)
\begin{subequations}
\begin{eqnarray}
&& \dot{z}(t) = e^{v(t)}\,,\hspace*{36mm} z(0) = 0\,,\ \ z(1) =
1\,,  \label{tpbvp1}  \\[1mm]
&& \ddot{v}(t) = \beta^2\,(v(t) - w(t)) + \gamma\,e^{v(t)}\,,\ \ \dot{v}(0) = 0\,,\ \ \dot{v}(1) = 0\,, \nonumber \\
&& \hspace*{57mm} \dot{v}(t_j^+) = \dot{v}(t_j^-) - \frac{1}{\alpha}\,,\ j=1,\ldots,n\,,  \label{tpbvp2}
\end{eqnarray}
\end{subequations}
for all $t\in(t_i,t_{i+1})$, $ i = 0,\ldots,n$, where $t_0 = 0$, $t_{n+1} = 1$, and $\gamma$ is a real constant.
\end{theorem}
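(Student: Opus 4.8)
The plan is to read the claimed boundary-value problem directly off the necessary conditions of optimality \eqref{state1}--\eqref{controlb}, which are already at our disposal from Section~\ref{sec:maxprinciple}. By the reformulation of Section~\ref{sec:formulation}, solving Problem~(P) (understood with the normality constraint~\eqref{constr1}) is equivalent to solving Problem~(OCP) under the identifications $z:=x_1$, $v=x_2$, $u=\dot v$; and by Lemma~\ref{lem:normal} the problem is normal, so we take $\lambda_0=1$ and, by \eqref{controlb}, $u(t)=-\lambda_2(t)$.

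The first equation is immediate: the state equation \eqref{state1} reads $\dot z(t)=e^{v(t)}$ with $z(0)=0$ and $z(1)=1$, which is precisely \eqref{tpbvp1}, the terminal value $z(1)=1$ being the normality constraint~\eqref{constr1}. The constant $\gamma$ appearing in \eqref{tpbvp2} is the one in \eqref{adjoint1b}, i.e. the common value of the adjoint $\lambda_1$, which is a real constant.

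For the second equation I would eliminate the adjoint $\lambda_2$. From $\dot v=u=-\lambda_2$ we get $\lambda_2=-\dot v$. On each open stage interval $(t_i,t_{i+1})$ the right-hand side of the adjoint differential equation in \eqref{adjoint2b} is a continuous function of $t$ (since $v$ and $w$ are continuous), hence $\lambda_2$ is there of class $C^1$ and $v$ of class $C^2$ with $\ddot v=-\dot\lambda_2$. Substituting $\lambda_2=-\dot v$ into that differential equation and multiplying through by $-1$ gives
\[
\ddot v(t)=\beta^2\,(v(t)-w(t))+\gamma\,e^{v(t)}\,,\qquad t\in(t_i,t_{i+1})\,,\ \ i=0,\dots,n\,.
\]
The endpoint conditions $\lambda_2(0)=0$ and $\lambda_2(1)=0$ turn into $\dot v(0)=0$ and $\dot v(1)=0$, while the interior jump $\lambda_2(t_j^+)=\lambda_2(t_j^-)+1/\alpha$ turns into $\dot v(t_j^+)=\dot v(t_j^-)-1/\alpha$ for $j=1,\dots,n$. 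Since the multistage reformulation imposes continuity of $x_2$ at the stage junctions, $v$ itself is continuous at each $t_j$, so the only jump is in $\dot v$; collecting these statements is exactly \eqref{tpbvp2}.

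With the necessary conditions \eqref{state1}--\eqref{controlb} already in hand, the argument amounts to a change of variables and one differentiation, so I do not anticipate a genuine obstacle. The two points needing a little care are (i) the regularity bookkeeping that promotes the first-order adjoint system to a second-order ODE for $v$ on the open stage intervals, and (ii) tracking the interior jump: one has to trace the size and sign of the jump back to the coefficient $-1/\alpha$ of $x_2(t_i)$ in the objective of (OCP) and to the convention recorded in \eqref{jump}, and then verify that the substitution $\lambda_2=-\dot v$ flips the sign so as to produce the $-1/\alpha$ in \eqref{tpbvp2}.
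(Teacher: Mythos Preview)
Your proposal is correct and follows essentially the same route as the paper: invoke Lemma~\ref{lem:normal} to set $\lambda_0=1$ and $u=-\lambda_2$, then substitute $z=x_1$, $v=x_2$, $\lambda_2=-\dot v$, $\dot\lambda_2=-\ddot v$ into \eqref{state1} and \eqref{adjoint2b} to read off \eqref{tpbvp1}--\eqref{tpbvp2}. Your extra remarks on stagewise $C^1$/$C^2$ regularity and on the sign of the jump are helpful elaborations, but they do not constitute a different approach.
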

\begin{proof}
Equation~\eqref{state2} and Lemma~\ref{lem:normal} imply that $\dot{v}(t) = \dot{x}_2(t) = u(t) = -\lambda_2(t)$.  Then $\ddot{v}(t) = -\dot{\lambda}_2(t)$.  Substituting $x_1 = z$, $x_2 = v$, $\lambda_2 = -\dot{v}$ and $\dot{\lambda}_2 = -\ddot{v}$ into~\eqref{state1} and \eqref{adjoint2b}, and rearranging, one gets the two-point boundary-value problem stated in \eqref{tpbvp1}--\eqref{tpbvp2}.
\end{proof}

\begin{remark}[Smoothness Parameter {\boldmath $\alpha$}]  \label{rem:jump}  \rm 
Since only a finite jump occurs in the values of $\dot{v}$ at $t_i$, the variable $v$ is continuous in $t$.  Therefore, $f_v$ in~\eqref{fv2} is not differentiable but continuous at $t_i$.  Otherwise, $f_v$ is continuously differentiable at all $t\neq t_i$.  It should be noted that, as the smoothness parameter $\alpha$ tends to infinity, the jump $1/\alpha$ in the values of $\dot{v}$ at $t_i$ tends to zero, in other words, as $\alpha\to\infty$, $\dot{v}(t_i^+)\to\dot{v}(t_i^-)$.  Likewise, it is no wonder why there exists no solution if $\alpha\to 0$, as the jumps at $t_i$ then tend to infinity.

It is worth pointing out that, with a finite $\alpha > 0$, {\em smoothness} is never achieved per se, mathematically speaking; however, one rather gets ``closer'' to a ``smooth solution'' with larger values of $\alpha$.
\proofbox
\end{remark}

The following corollary to Theorem~\ref{theo:tpbvp} provides an expression for the optimal value of $\gamma$.
\begin{corollary}[Parameter {\boldmath $\gamma$}]
\label{cor:gamma}
One has the identity that
\begin{equation}  \label{id:gamma}
\gamma = \frac{n}{\alpha} - \beta^2\int_0^1 (v(\tau) - w(\tau))\,d\tau\,.
\end{equation}
\end{corollary}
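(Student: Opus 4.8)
The identity should fall out of integrating the second-order equation \eqref{tpbvp2} over the whole interval $[0,1]$ and bookkeeping the interior jumps. First I would integrate $\ddot v$ piecewise: writing $\sum_{i=0}^{n}\int_{t_i}^{t_{i+1}}\ddot v(t)\,dt = \sum_{i=0}^{n}\bigl(\dot v(t_{i+1}^-)-\dot v(t_i^+)\bigr)$, the sum telescopes into $\dot v(1)-\dot v(0)+\sum_{j=1}^{n}\bigl(\dot v(t_j^-)-\dot v(t_j^+)\bigr)$. Using the boundary conditions $\dot v(0)=\dot v(1)=0$ and the jump relation $\dot v(t_j^+)=\dot v(t_j^-)-1/\alpha$ from \eqref{tpbvp2}, each bracketed term equals $1/\alpha$, so the left-hand side contributes exactly $n/\alpha$.

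Next I would integrate the right-hand side of \eqref{tpbvp2} over $[0,1]$, giving $\beta^2\int_0^1(v(t)-w(t))\,dt+\gamma\int_0^1 e^{v(t)}\,dt$. Here I would invoke \eqref{tpbvp1}: since $\dot z=e^{v}$ with $z(0)=0$ and $z(1)=1$, we have $\int_0^1 e^{v(t)}\,dt = z(1)-z(0)=1$ (this is just the normality constraint \eqref{constr1} written in the new notation). Hence the right-hand side collapses to $\beta^2\int_0^1(v(t)-w(t))\,dt+\gamma$.

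Equating the two evaluations yields $n/\alpha = \gamma + \beta^2\int_0^1(v(\tau)-w(\tau))\,d\tau$, which rearranges to \eqref{id:gamma}. The only point needing a little care is the sign and count in the telescoping step — making sure the $n$ interior junctions each produce $+1/\alpha$ on the left while the two genuine endpoints vanish — but this is elementary given Theorem~\ref{theo:tpbvp}, so I do not anticipate any real obstacle. One could also note in passing that the fact $v$ is absolutely continuous (Remark~\ref{rem:jump}), with $\dot v$ only having finite jumps, is what makes the piecewise integration legitimate.
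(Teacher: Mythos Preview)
Your proposal is correct and matches the paper's own proof essentially line for line: integrate the ODE in \eqref{tpbvp2} over $[0,1]$, evaluate $\int_0^1\ddot v\,d\tau$ piecewise so that the boundary conditions and the $n$ jump relations telescope to $n/\alpha$, and use \eqref{tpbvp1} to reduce $\int_0^1 e^{v}\,d\tau$ to $1$, then rearrange. There is nothing to add or correct.
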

\begin{proof}
Integrate both sides of the ODE in~\eqref{tpbvp2} to get
\begin{equation}  \label{int:gamma}
\int_0^1 \ddot{v}(\tau)\,d\tau = \beta^2\int_0^1 (v(\tau) -w(\tau))\,d\tau + \gamma\int_0^1 e^{v(\tau)}\,d\tau\,.
\end{equation}
The left-hand side of~\eqref{int:gamma} can be expanded, and evaluated using the boundary and interior conditions in~\eqref{tpbvp2}, as follows.
\begin{eqnarray}
\int_0^1 \ddot{v}(\tau)\,d\tau &=& \int_0^{t_1} \ddot{v}(\tau)\,d\tau + \int_{t_1}^{t_2} \ddot{v}(\tau)\,d\tau + \ldots + \int_{t_n}^{1} \ddot{v}(\tau)\,d\tau \nonumber \\
&=& (\dot{v}(t_1^-) - \dot{v}(0)) + (\dot{v}(t_2^-) - \dot{v}(t_1^+)) + (\dot{v}(t_3^-) - \dot{v}(t_2^+)) + \ldots + (\dot{v}(1) - \dot{v}(t_n^+)) \nonumber \\[1mm]
&=& - \dot{v}(0) + (\dot{v}(t_1^-) - \dot{v}(t_1^+)) + (\dot{v}(t_2^-) - \dot{v}(t_2^+)) + \ldots + (\dot{v}(t_n^-) - \dot{v}(t_n^+)) + \dot{v}(1) \nonumber \\
&=& 0 + \frac{1}{\alpha} + \frac{1}{\alpha} + \ldots + \frac{1}{\alpha} + 0 \nonumber \\
&=& \frac{n}{\alpha}\,. \label{LHS}
\end{eqnarray}
On the other hand, by~\eqref{tpbvp1}, the second integral on the right-hand side of~\eqref{int:gamma} can simply be evaluated as
\begin{equation} \label{RHS}
\int_0^1 e^{v(\tau)}\,d\tau = z(1) - z(0) = 1\,.
\end{equation}
Now, substituting \eqref{LHS}--\eqref{RHS} into \eqref{int:gamma} and rearranging the terms, one gets~\eqref{id:gamma}.
\end{proof}

\begin{remark}[Asymptotic value of {\boldmath $\gamma$}] \rm
\label{rem:gamma}
Suppose that the modulus of the integral in~\eqref{id:gamma} is bounded by some constant $M>0$ for all $n \ge N$, where $N$ is a positive integer.  Then it follows from~\eqref{id:gamma} that, for any given $\varepsilon >0$, there exists a large enough $n$ such that $|\gamma - n/\alpha| \le\varepsilon$.
In other words, practically speaking, if the difference between the solution function $v(t)$ and the given function $w(t)$ remains bounded, then, for large enough $n$, $\gamma$ will be approximately equal to $n/\alpha$.
\end{remark}

If we take $w(t)=0$, for all $t\in[0,1]$, then the order of the ODE in \eqref{tpbvp2} can be reduced, as we state below.  Although this result does not provide an additional practical or theoretical advantage, we still state it here for the sake of completeness.

\begin{theorem}[Order Reduction]
\label{theo:reduction}
Suppose that $w(t)=0$ for all $t\in[0,1]$.  Then~\eqref{tpbvp2} can be replaced by
\begin{equation}  \label{reduced}
\dot{v}^2(t) = \beta^2\,v^2(t) + 2\,\gamma\,e^{v(t)} + C_i\,,
\quad\mbox{for all } t\in[t_i,t_{i+1}),\ 
i=0,1,\ldots,n\,, 
\end{equation}
with $\dot{v}(1) = 0$\,, where $\gamma$ is some real number, and
\[
C_i = C_{i-1} - \frac{2}{\alpha}\,\dot{v}(t_i^-) + \frac{1}{\alpha^2}\,,
\]
for $i=1,\ldots,n$, with
\[
C_0 = -\beta^2\,v^2(0) - 2\,\gamma\,e^{v(0)}\,.
\]
\end{theorem}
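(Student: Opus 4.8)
The plan is to apply the classical ``first integral'' (or energy) trick available for autonomous second-order ODEs. On any stage interval $(t_i,t_{i+1})$ the equation in \eqref{tpbvp2} with $w\equiv 0$ reads $\ddot v = \beta^2 v + \gamma e^{v}$. Multiplying both sides by $\dot v$ turns every term into an exact derivative: $\ddot v\,\dot v = \tfrac12\frac{d}{dt}\dot v^2$, $\beta^2 v\,\dot v = \tfrac12\beta^2\frac{d}{dt}v^2$, and $\gamma e^{v}\dot v = \gamma\frac{d}{dt}e^{v}$. Integrating from $t_i$ to $t\in[t_i,t_{i+1})$ and multiplying by $2$ yields
\[
\dot v^2(t) = \beta^2 v^2(t) + 2\gamma e^{v(t)} + C_i,\qquad t\in[t_i,t_{i+1}),
\]
with $C_i := \dot v^2(t_i^+) - \beta^2 v^2(t_i) - 2\gamma e^{v(t_i)}$, where I use that $v$ is continuous at $t_i$ by Remark~\ref{rem:jump}. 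This is precisely \eqref{reduced}.

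Next I would identify the constants. For $i=0$ the boundary condition $\dot v(0)=0$ from \eqref{tpbvp2} gives $C_0 = -\beta^2 v^2(0) - 2\gamma e^{v(0)}$, as stated. For the recursion, I evaluate the identity just obtained on the previous stage as $t\uparrow t_i$, which gives $C_{i-1} = \dot v^2(t_i^-) - \beta^2 v^2(t_i) - 2\gamma e^{v(t_i)}$ (again using continuity of $v$); subtracting this from the formula for $C_i$ leaves $C_i - C_{i-1} = \dot v^2(t_i^+) - \dot v^2(t_i^-)$. Finally the interior jump condition $\dot v(t_i^+) = \dot v(t_i^-) - 1/\alpha$ from \eqref{tpbvp2} gives $\dot v^2(t_i^+) = \dot v^2(t_i^-) - (2/\alpha)\,\dot v(t_i^-) + 1/\alpha^2$, whence $C_i = C_{i-1} - (2/\alpha)\,\dot v(t_i^-) + 1/\alpha^2$. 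The endpoint condition $\dot v(1)=0$ is carried over unchanged.

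The computation is routine; the only points requiring care are the bookkeeping with the one-sided limits $t_i^\pm$ at the stage junctions, and the remark that although passing from $\ddot v = \beta^2 v + \gamma e^{v}$ to $\dot v^2 = \cdots$ formally drops the sign of $\dot v$, the replacement in the statement is of the full system: the first-order identity on each stage together with the jump relations linking the $C_i$ and the endpoint conditions $\dot v(0)=\dot v(1)=0$. Differentiating \eqref{reduced} recovers $2\dot v\,\ddot v = 2\beta^2 v\,\dot v + 2\gamma e^{v}\dot v$, hence \eqref{tpbvp2} wherever $\dot v\neq 0$, which is all that is needed for the intended use. I do not anticipate any real obstacle here — it is a clean exercise in energy-type first integrals.
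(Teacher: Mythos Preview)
Your argument is correct and is essentially the same as the paper's: the paper phrases the order reduction via the autonomous-ODE substitution $\phi(v):=\dot v$ and integrates $\tfrac{d}{dv}(\tfrac12\phi^2)=\beta^2 v+\gamma e^{v}$, which is exactly your ``multiply by $\dot v$ and integrate'' step, and then derives $C_0$ and the recursion for $C_i$ from the boundary and jump conditions in the same way you do. The only cosmetic difference is the framing (change of independent variable versus energy trick); the computations and use of continuity of $v$ at $t_i$ coincide.
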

\begin{proof}
  In order to reduce the order of the differential equation in
  \eqref{tpbvp2}, one can use the transformation $\phi(v):=\dot{v}$
  (see, e.g., \cite[Section 2.9.1]{ZaiPol2003}), because the
  right-hand side of \eqref{tpbvp2} does not depend on $t$ explicitly.
  Note that $d\phi(v)/dt = \ddot{v}$ and so, using~\eqref{tpbvp2},
\[
\frac{d\phi}{dv}\,\phi =
\frac{d}{dv}\,\left(\frac{1}{2}\,\phi^2\right) = \beta^2\,(v - w)+
\gamma\,e^v\,,
\]
which, after integrating, yields \eqref{reduced}, with real constants $C_i$. For $t\in[0,t_1)$, $C_0$ is simply obtained by substituting
$t=0$ and $\dot{v}(0) = 0$ into \eqref{reduced} and then re-arranging
the resulting equation.  For $t\in[t_i,t_{i+1})$, $i=1,\ldots,n$,
$C_i$ in \eqref{reduced} are obtained as follows.  Note that, using~\eqref{reduced} and the interior conditions in \eqref{tpbvp2}, one
gets
\begin{eqnarray*}
\dot{v}^2(t_i^+) = \left(\dot{v}(t_i^-) - \frac{1}{\alpha}\right)^2
&=& \dot{v}^2(t_i^-) - \frac{2}{\alpha}\,\dot{v}(t_i^-) +
\frac{1}{\alpha^2} \\
&=& \beta^2\,v^2(t_i) + 2\,\gamma\,e^{v(t_i)} + C_i\,.
\end{eqnarray*}
Now, the substitution of
\[
\dot{v}^2(t_i^-) = \beta^2\,v^2(t_i) + 2\,\gamma\,e^{v(t_i)} + C_{i-1} 
\]
into the above equation, and manipulations, result in the
expression required for $C_i$.
\end{proof}

\section{Numerical Implementation and Experiments}
\label{sec:numerical}

In estimating the density function in~\eqref{fv}, or~\eqref{fv2}, the two-point boundary value problem with $n$ specified interior points
in~\eqref{tpbvp1}--\eqref{tpbvp2} must be solved numerically.  In what follows, we propose a novel discretization scheme (partitioning) and standard (Euler and trapezoidal) methods for this purpose.

\subsection{Discretization}

Let $y_1:=z$, $y_2:=v$ and $y_3:=\dot{v}$.  Then \eqref{tpbvp1}--\eqref{tpbvp2}
can be re-written as the system of first-order ODEs with boundary and interior conditions and an unknown parameter, as
\begin{subequations}
\begin{eqnarray} 
&& \dot{y}_1(t) = e^{y_2(t)}\,,\hspace*{38mm} y_1(0) = 0\,,\ \ y_1(1) = 1\,,  \label{tpbvp1a}  \\[1mm]
&& \dot{y}_2(t) = y_3(t)\,, \label{tpbvp2a} \\[1mm]
&& \dot{y}_3(t) = \beta^2\,(y_2(t) - w(t)) + \gamma\,e^{y_2(t)}\,,\ \ y_3(0) = 0\,,\ \ y_3(1) = 0\,, \nonumber \\
&& \hspace*{62mm} y_3(t_j^+) = y_3(t_j^-) - \frac{1}{\alpha}\,,\ j=1,\ldots,n\,,  \label{tpbvp3a}
\end{eqnarray}
\end{subequations}
for all $t\in(t_i,t_{i+1})$, $ i = 0,\ldots,n$, where $t_0 = 0$, $t_{n+1} = 1$, and $\gamma$ is the unknown parameter.

Let $h$ denote the {\em nominal step size} of the discretization.  Then the number of steps
$m_i$ in stage $i$ is given by
\[
m_i := \left\lceil \frac{t_i - t_{i-1}}{h}\right\rceil\,,
\]
$i = 1,\ldots,n+1$, where $\lceil\,\cdot\,\rceil$ denotes the smallest greater integer.  We define the {\em step sizes} in stage~$i$, $i=1,\ldots,n+1$, as
\[
h_j^i := \left\{\begin{array}{ll}
h\,,   &\ \mbox{if } m_i > 1\,,\mbox{ for } j = 1,\ldots,m_i-1\,, \\[1mm] 
t_i - h_{m_i-1}^i\,,   &\ \mbox{if } m_i > 1\,,\mbox{ for } j = m_i\,, \\[1mm] 
t_i - t_{i-1}\,, &\ \mbox{if } m_i = 1\,,\mbox{for } j = m_i.
\end{array} \right.
\]
The above step sizes inform one as to how the discretization (time grid) points should next be defined.  Let $t_{i,0} := t_i$ and, if $m_i > 1$, $t_{i,j} := t_{i,j-1} + h$, $i
= 0,\ldots,n+1$, $j = 1,\ldots,m_i$.  Define the partition
\begin{equation}  \label{partition}
\pi := \{t_{0,0},t_{0,1},\ldots,t_{0,m_1-1};t_{1,0},t_{1,1},\ldots,t_{1,m_2-1};\ldots;t_{n,1},t_{n,2},\ldots,t_{n,m_{n+1}-1};t_{n+1,0}\}\ .
\end{equation}
Next, define the two index sets,
\begin{equation}  \label{indexsets}
K := \{0,1,\ldots,L\}\qquad\mbox{and}\qquad
T := \left\{m_1, m_1 + m_2,\ldots,\sum_{i=1}^n m_i\right\}\,,
\end{equation}
where $L=\card(\pi)- 1$, with $\card(\pi)$ denoting the cardinality
(i.e., the number of elements) of the partition set $\pi$.  Note that
the elements of the index set $T$ correspond to (the subscripts of) the sample points
$t_1,t_2,\ldots,t_n$: Re-write the set $\pi$, with its elements
re-named, as $\pi = \{s_0,\ldots,s_L\}$.  Then $t_1 = s_{m_1}$, $t_2 =
s_{m_1+m_2}$, and so on. We also conveniently define a sequence of step sizes by
\begin{equation}  \label{stepsize}
h_k := s_{k+1} - s_k\,,\quad k=0,\ldots,L\,.
\end{equation}

For brevity, let the right-hand side of the ODE in~\eqref{tpbvp3a} be defined as
\begin{equation} \label{def:f}
f(y_2(t),t) := \beta^2\,(y_2(t) - w(t)) + \gamma\,e^{y_2(t)}\,.
\end{equation}
Then the {\em Euler discretization} of the equations in~\eqref{tpbvp1a}--\eqref{tpbvp3a}, incorporating the definition in~\eqref{def:f}, is described by
\begin{subequations}
\begin{eqnarray} 
&& \left[\begin{array}{l} y_{1,k+1} \\[1mm]  y_{2,k+1} \\[1mm] y_{3,k+1} \end{array}\right] = 
\left[\begin{array}{l} y_{1,k} \\[1mm]  y_{2,k} \\[1mm] y_{3,k} \end{array}\right]  + 
\left\{\begin{array}{ll}
h_k \left[\begin{array}{l} e^{y_{2,k}} \\[1mm] y_{3,k}  \\[1mm] f(y_{2,k},s_k)\end{array}\right],   &\mbox{if } k\in K\backslash T\,, \\[8mm] 
\left[\begin{array}{l} h_k\,e^{y_{2,k}} \\[1mm]  h_k\,(y_{3,k} - 1/\alpha) \\[1mm]  -1/\alpha + h_k\,f(y_{2,k},s_k)\end{array}\right], &\mbox{if } k\in T\,;
\end{array} \right.  \label{Euler1}\\[2mm]
&& y_{1,0} = 0\,,\ \ y_{1,L} = 1\,,\ \  y_{3,0} = 0\,,\ \ y_{3,L} = 0\,,  \label{Euler2}
\end{eqnarray}
\end{subequations}
for $k=0,1,\ldots,L-1$.  In \eqref{Euler1}--\eqref{Euler2}, $y_{i,k}$
are the Euler scheme approximations of $y_i(s_k)$, $i=1,2$.

A solution of the Euler approximation of Equations~\eqref{tpbvp1a}--\eqref{tpbvp3a} has an accuracy of order one.  For higher-order accuracies, more general (implicit as well as explicit) Runge--Kutta methods can also be employed~\cite{HaiLubWan2006}; however, in that case, one needs to generate the partition with more care because of the jump in the values of $\dot{v}$ at the data points $t_k$, $k\in T$.  We note that the Euler scheme is, in fact, the simplest possible (explicit) Runge--Kutta method.

In what follows, we provide an order-two approximation of \eqref{tpbvp1a}--\eqref{tpbvp3a}, along with the boundary conditions, using the {\em trapezoidal rule}, which is an implicit Runge--Kutta method.  The trapezoidal rule requires evaluation at just two points; therefore, the discontinuities at $t_k$, $k\in T$, do not pose any difficulty, just as in the case of Euler's method, and these discontinuities can be managed efficiently using the partitioning defined in \eqref{partition}--\eqref{stepsize}.
\begin{subequations}
\begin{eqnarray} 
&& \left[\begin{array}{l} y_{1,k+1} \\[1mm]  y_{2,k+1} \\[1mm] y_{3,k+1} \end{array}\right] = 
\left[\begin{array}{l} y_{1,k} \\[1mm]  y_{2,k} \\[1mm] y_{3,k} \end{array}\right]  + 
\left\{\begin{array}{ll}
\ds\frac{h_k}{2} \left[\begin{array}{l} e^{y_{2,k}} + e^{y_{2,k+1}} \\[1mm] y_{3,k} + y_{3,k+1}  \\[1mm] f(y_{2,k},s_k) + f(y_{2,k+1},s_{k+1})\end{array}\right],   &\mbox{if } k\in K\backslash T\,, \\[8mm] 
\left[\begin{array}{l} h_k\,(e^{y_{2,k}} + e^{y_{2,k+1}})/2 \\[1mm]  h_k\,(y_{3,k} - 1/\alpha + y_{3,k+1})/2 \\[1mm]  -1/\alpha + h_k\,(f(y_{2,k},s_k) + f(y_{2,k+1},s_{k+1}))/2\end{array}\right], &\mbox{if } k\in T\,;
\end{array} \right.  \label{trap1}\\[2mm]
&& y_{1,0} = 0\,,\ \ y_{1,L} = 1\,,\ \  y_{3,0} = 0\,,\ \ y_{3,L} = 0\,,  \label{trap2}
\end{eqnarray}
\end{subequations}
for $k=0,1,\ldots,L-1$.  In \eqref{trap1}--\eqref{trap2}, $y_{i,k}$ are
the trapezoidal rule approximations of $y_i(s_k)$, $i=1,2$.  

In view of the intricacies of the management of discontinuities hinted above, we leave higher-order approximations via more general Runge--Kutta methods outside the scope of the current paper.  Moreover, not employing higher-order Runge--Kutta schemes here is also justified because the estimate of the density function itself (as a solution of \eqref{tpbvp1a}--\eqref{tpbvp3a}) is not even differentiable, for finite $\alpha$.

\subsection{Numerical experiments}

The Euler discretization given in \eqref{Euler1}--\eqref{Euler2}, or the trapezoidal discretization given in \eqref{trap1}--\eqref{trap2}, constitute a nonlinear system of $3L+4$ equations in the $3L+4$ unknowns, $y_{i,j}$, $i = 1,2,3$, $j=0,1,\ldots,L$, and $\gamma$.  Here, $L$ is typically large since it has to be at least as large as the number of data points $n$ and we aim to get a reasonably accurate approximation of the density function estimate.  For solving either of the large-scale system of equations as a {\em feasibility problem}, various well-established general nonlinear programming software are available, such as Algencan~\cite{Andreani2007,BirMar2014}, which implements augmented Lagrangian techniques; Ipopt~\cite{WacBie2006}, which implements an interior point method; SNOPT~\cite{GilMurSau2005}, which implements a sequential quadratic programming algorithm; Knitro~\cite{Knitro}, which implements various interior point and active set algorithms to choose from.

In order to obtain an approximate, or discrete, solution to~\eqref{tpbvp1}--\eqref{tpbvp2}, we employ the solver Knitro (version 13.0.1 is used here) and use AMPL \cite{AMPL} as a modelling language that employs Knitro as the solver.  We set the Knitro parameters {\tt alg=0} (meaning that it is left to Knitro to choose an appropriate algorithm) and {\tt feastol=1e-10} (meaning that we set the feasibility tolerance at $10^{-10}$).

The {\em CPU times} are reported through the AMPL command {\tt \_ampl\_elapsed\_time}, the AMPL--Knitro suite running on a 14-inch 2021-model MacBook Pro, with the operating system macOS Sequoia (version 15.2), the Apple M1 Max processor with a 10 core CPU and the 64 GB LPDDR5 memory.

For comparison purposes, we also employ the statistical computing and graphics software~R~\cite{R}, version 4.4.2 (Pile of Leaves) released on 31 October 2024, which finds estimates of a density function for given data using the kernel method, for various {\em bandwidths}, abbreviated as ``bw'' in numerical experiments here.

In Examples~2 and 3, we display the histograms of the data provided by using {\sc Matlab}'s {\tt histogram} command~\cite{Matlab}, which automatically chooses an appropriate number of bins to cover the range of values in the data set.

\subsubsection{Example 1: A normal distribution}

We consider data sets of various sizes, namely $n = 10^2, 10^3, 10^4, 10^5$, data sampled from a normal distribution defined over a domain of $[0,1]$, with mean $\mu = 0.5$ and variance $\sigma^2 = 0.01$.  In Figure~\ref{fig:normal}, we display, for each data set, the graphs of the estimated density function from solutions to \eqref{trap1}--\eqref{trap2} (which are approximate solutions to \eqref{tpbvp1}--\eqref{tpbvp2}) for $\beta=1$ and various values of $\alpha$.  In Table~\ref{table:normal}, we list, for each data set, the nominal step size $h$ used for the domain partition, the resulting number $L$ of (the trapezoidal rule's) discretization points (or nodes), the set of values used for the smoothing parameter $\alpha$, and the resulting solution value of the constant $\gamma$ by solving \eqref{trap1}--\eqref{trap2}.  We also report the CPU times taken to obtain a solution.

\begin{table}
\centering
{\small
\begin{tabular}{cccccc} \hline \\[-2mm]
$n$ & $h$ & $L$ & $\alpha$ & $\gamma$ & \hspace*{-3mm}$\begin{array}{c} \mbox{CPU time} \\ \mbox{[sec]} \end{array}$ \\[4mm] \hline \\[-3mm]
$10^2$ & $1/(2\times10^3)$ & 2,052 & $\begin{array}{r} 0.01 \\ 0.1 \\ 1\ \ \\ 3\ \ \end{array}$ & $\begin{array}{r} 10002.4 \\ 1001.3 \\ 100.4 \\ 33.5 \end{array}$ & 0.11 \\[1mm] \hline \\[-3mm]
$10^3$ & $1/(2\times10^3)$ & 2,612 & $\begin{array}{r} 0.1 \\ 1\ \ \\ 5\ \ \end{array}$ & $\begin{array}{r} 10001.8 \\ 1001.1 \\ 200.6 \end{array}$ & 0.17 \\[1mm] \hline  \\[-3mm]
$10^4$ & $1/(2\times10^4)$ & 26,152\ \ & $\begin{array}{r} 1  \\ 10 \\ 30 \end{array}$ & $\begin{array}{r} 10001.8 \\ 1001.1 \\ 334.1 \end{array}$ & 92\ \ \ \ \ \ \ \\[1mm] \hline  \\[-3mm]
$10^5$ & $1/(1.1\times10^5)$ & 178,621\ \ \  & $\begin{array}{r} 10 \\ 100 \\ 200 \end{array}$ & $\begin{array}{r} 10001.7 \\ 1001.1 \\ 500.9  \end{array}$ & 1080 \ \ \ \ \ \ \ \ \ \\[1mm] \hline
\hline \end{tabular}
}
\caption{\sf Example~1---Normal distribution---The setting for each sampled data set, and the resulting values of $\gamma$ from solving \eqref{trap1}--\eqref{trap2}.}
\label{table:normal}
\end{table}

We have set the reference (or desired) distribution function to zero, that is, we have set $w(t) = 0$, for all $t\in[0,1]$.
As can be seen in Figure~\ref{fig:normal}, the estimated density function approximates the true density function better with more data sampled from the distribution, that is, with a larger $n$, as expected.  

\begin{figure}[t!]
\begin{minipage}{80mm}
\begin{center}
\includegraphics[width=80mm]{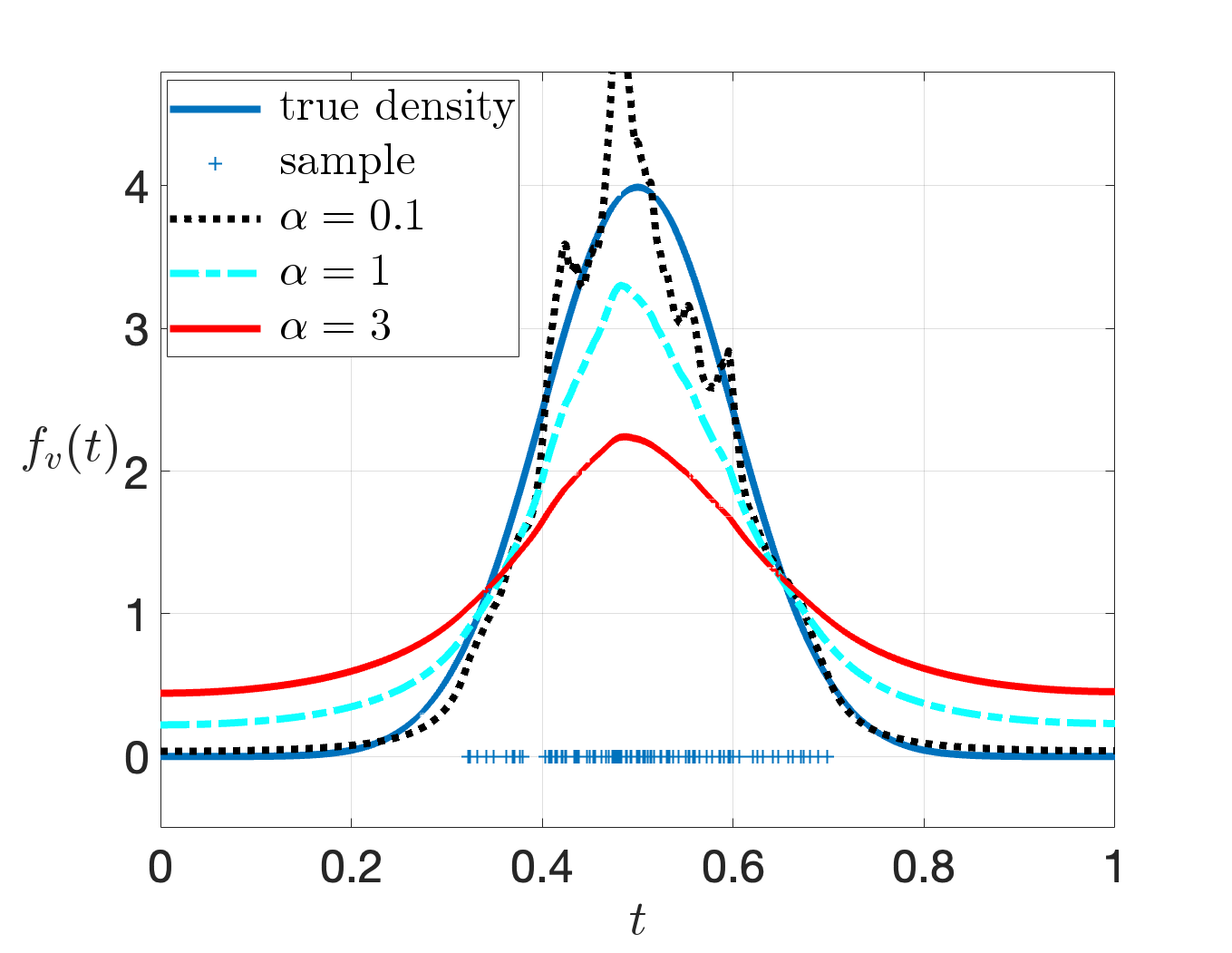}
\\[3mm] (a) 100 sample points
\end{center}
\end{minipage}
\begin{minipage}{80mm}
\begin{center}
\includegraphics[width=80mm]{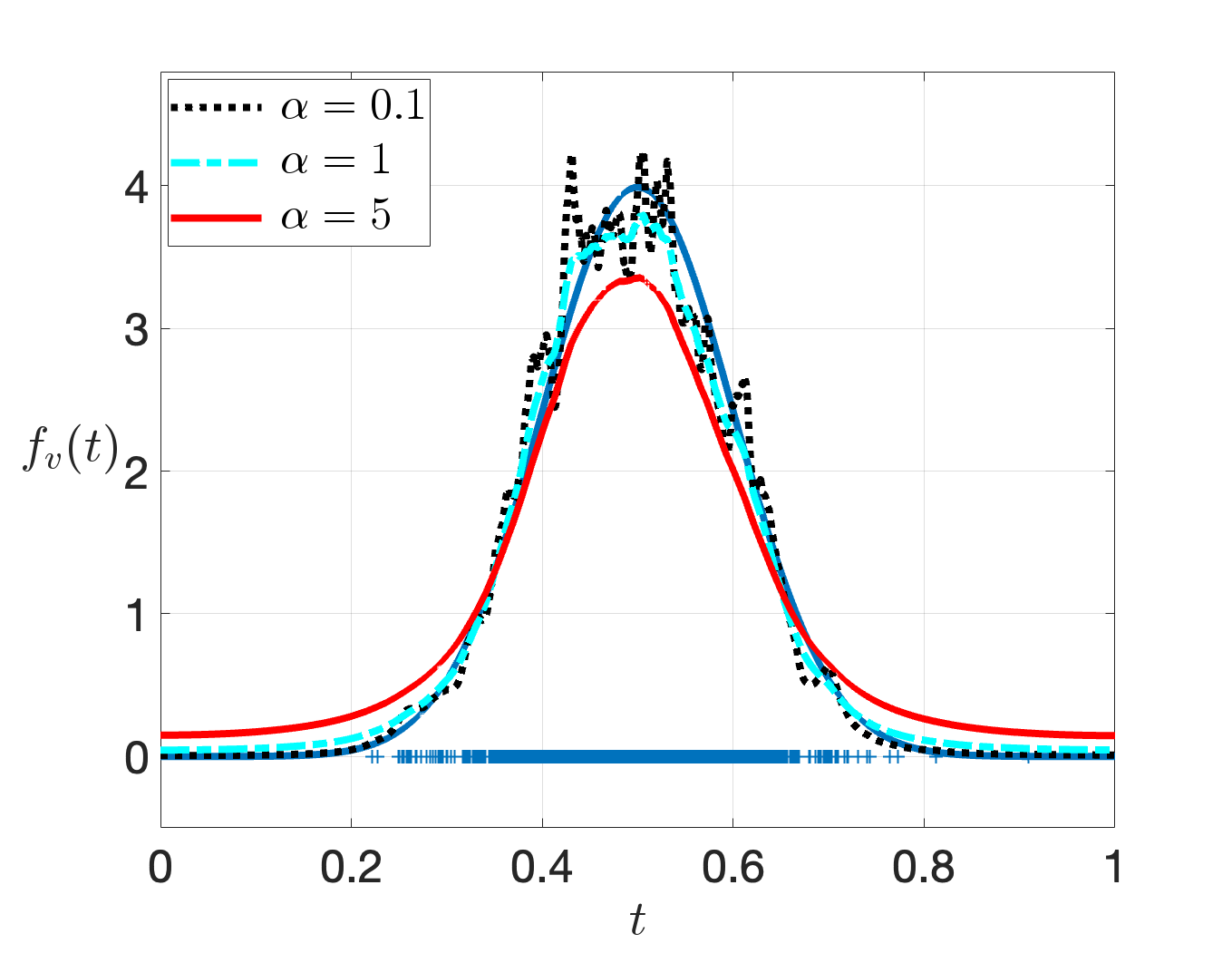} \\[3mm]
(b) 1,000 sample points
\end{center}
\end{minipage} \\[3mm]
\begin{minipage}{80mm}
\begin{center}
\includegraphics[width=80mm]{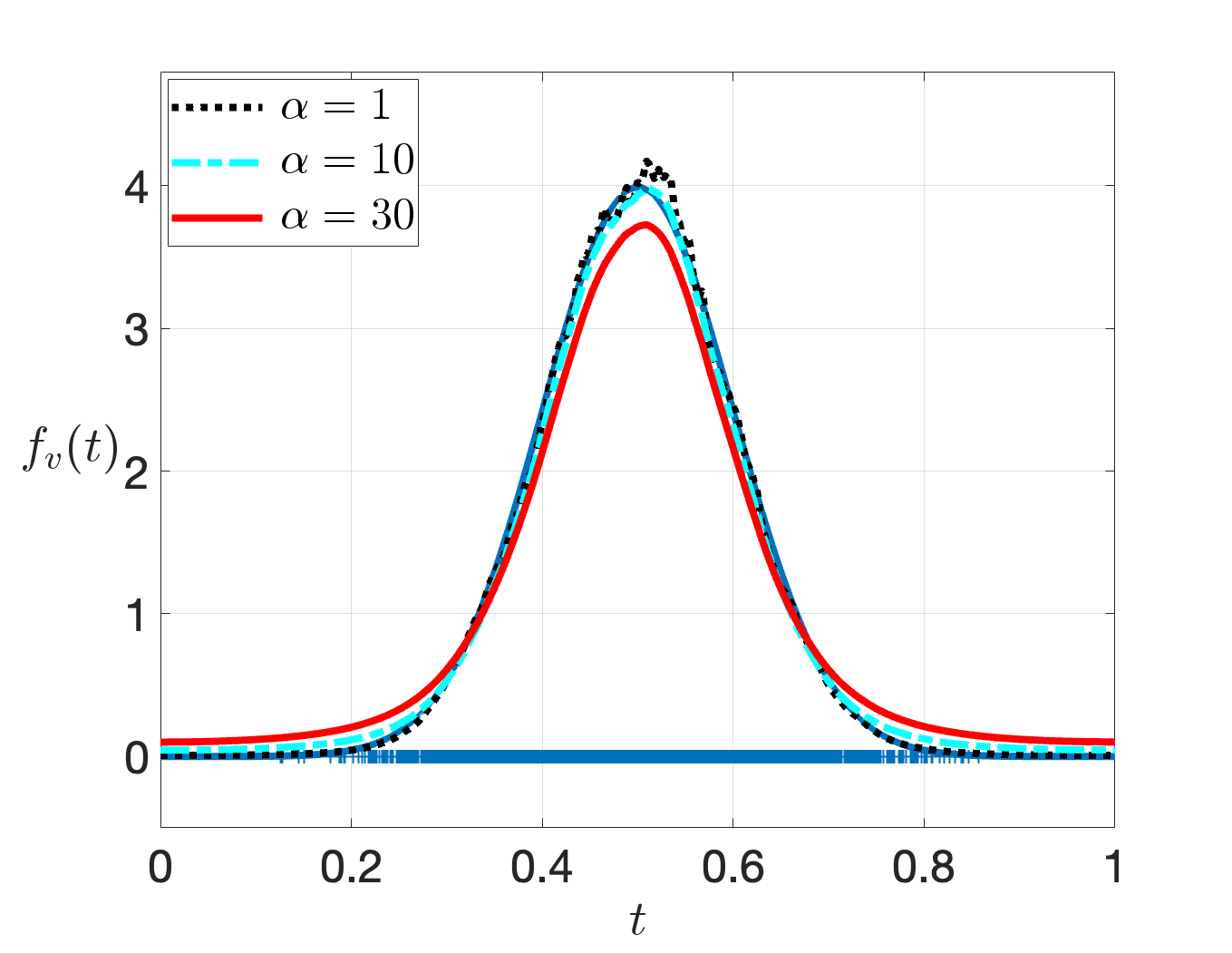} \\[3mm]
(c) 10,000 sample points
\end{center}
\end{minipage}
\begin{minipage}{80mm}
\begin{center}
\includegraphics[width=80mm]{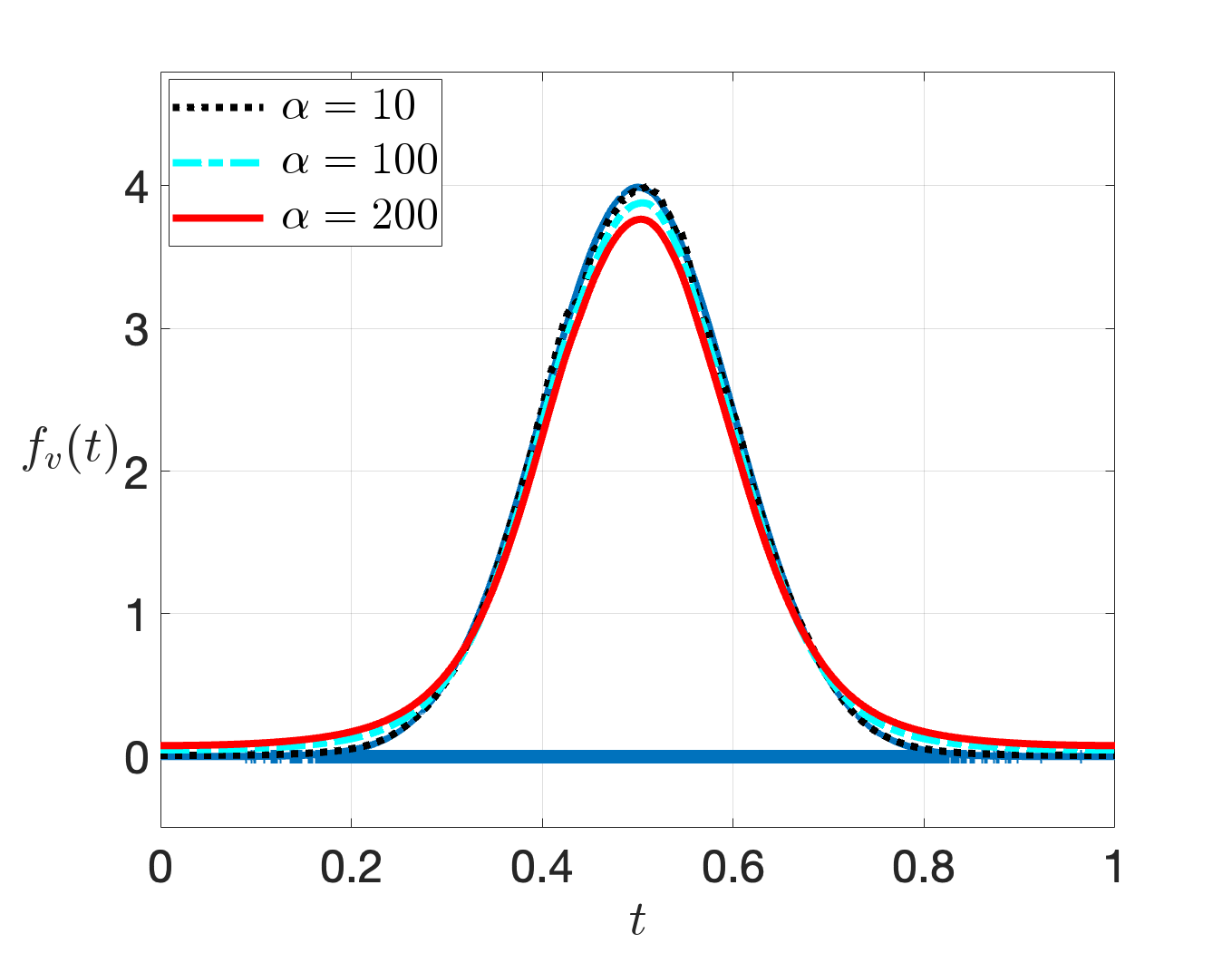} \\[3mm]
(d) 100,000 sample points
\end{center}
\end{minipage}
\caption{\sf Example~1---Normal distribution---density function estimated from various sets of sampled data via optimal control.}
\label{fig:normal}
\end{figure}

The ``smoothness'' of the estimated density (not in the mathematical sense but more in the visual sense) can also be adjusted by varying the value of $\alpha$.  It seems that visual smoothness can be improved at the expense of the accuracy of the estimation, which is expected given the competitive nature of the maximum likelihood and smoothness terms in Problem~(P).  For example, in Figure~\ref{fig:normal}(b), we observe that while the function obtained with $\alpha = 5$ is visually smoother, the function obtained with $\alpha = 1$, albeit more rugged, is closer to the true function (especially at the tail ends).  A similar feature is observed in Figure~\ref{fig:normal}(c), with $\alpha = 10$ and $\alpha = 30$, respectively.  Of course, the choice of an appropriate estimate needs to be left to the practitioner.

Table~\ref{table:normal} provides information on the computational aspects of the density estimation procedure that we employ.  From the CPU times listed, the exponential complexity of the procedure (with increasing values of $n$ and so of $L$) is evident.  It is interesting to observe that the computed value of the constant $\gamma$ is approximately $n/\alpha$, which confirms the arguments made in Remark~\ref{rem:gamma}.

With the same data sets used to obtain the density function estimates depicted in~Figure~\ref{fig:normal}, we have employed the statistical software~R to also estimate the density function; see Figure~\ref{fig:normal_R}.  It is interesting to note that the bandwidth (denoted bw) plays, at least to some extent, a role similar to that of $\alpha$.  We observe that the density functions estimated by the optimal control approach in this paper are of comparable quality to those obtained by the popular statistical software~R.

\afterpage{\clearpage}
\begin{figure}
\begin{minipage}{80mm}
\begin{center}
\includegraphics[width=80mm]{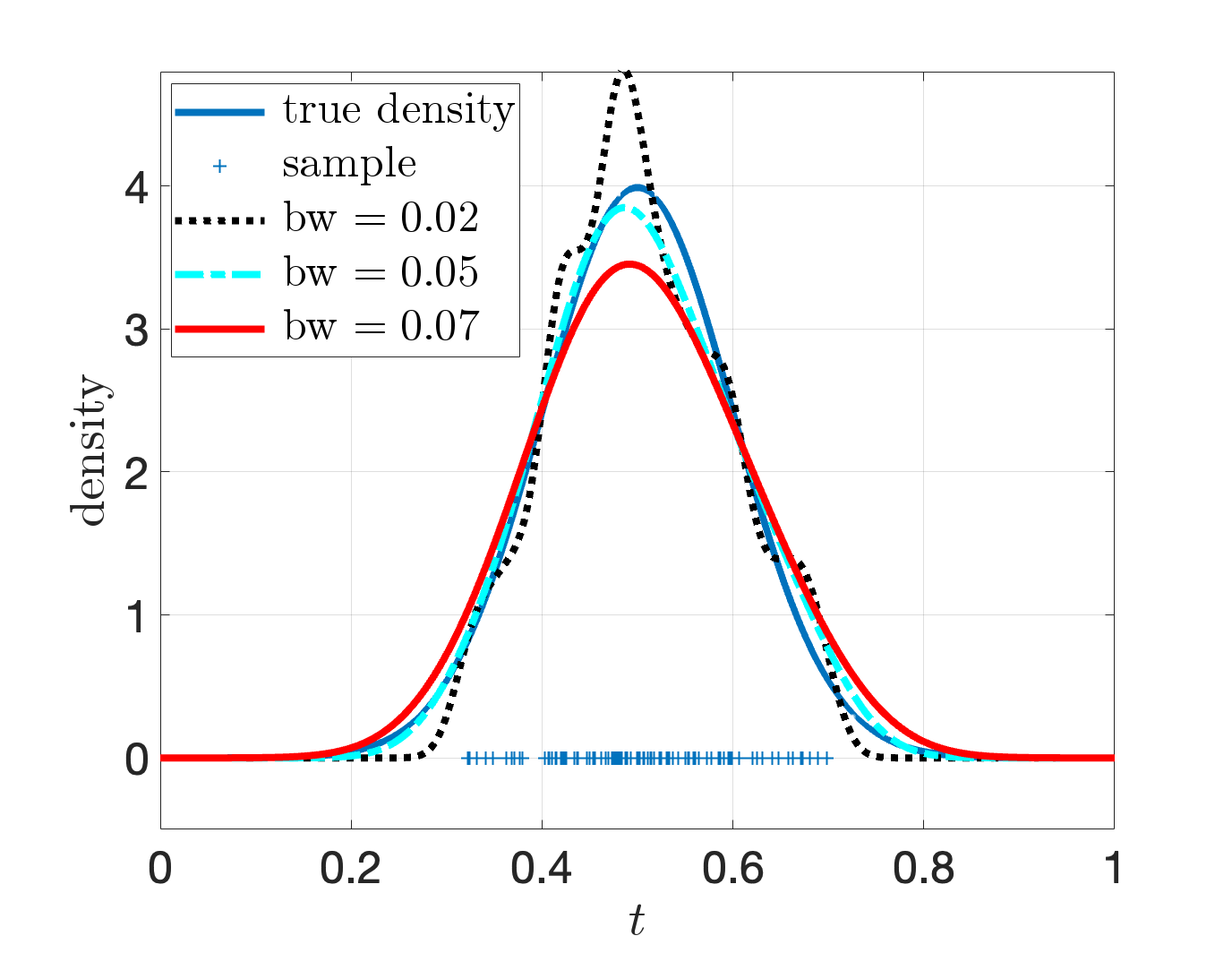}
\\[3mm] (a) 100 sample points
\end{center}
\end{minipage}
\begin{minipage}{80mm}
\begin{center}
\includegraphics[width=80mm]{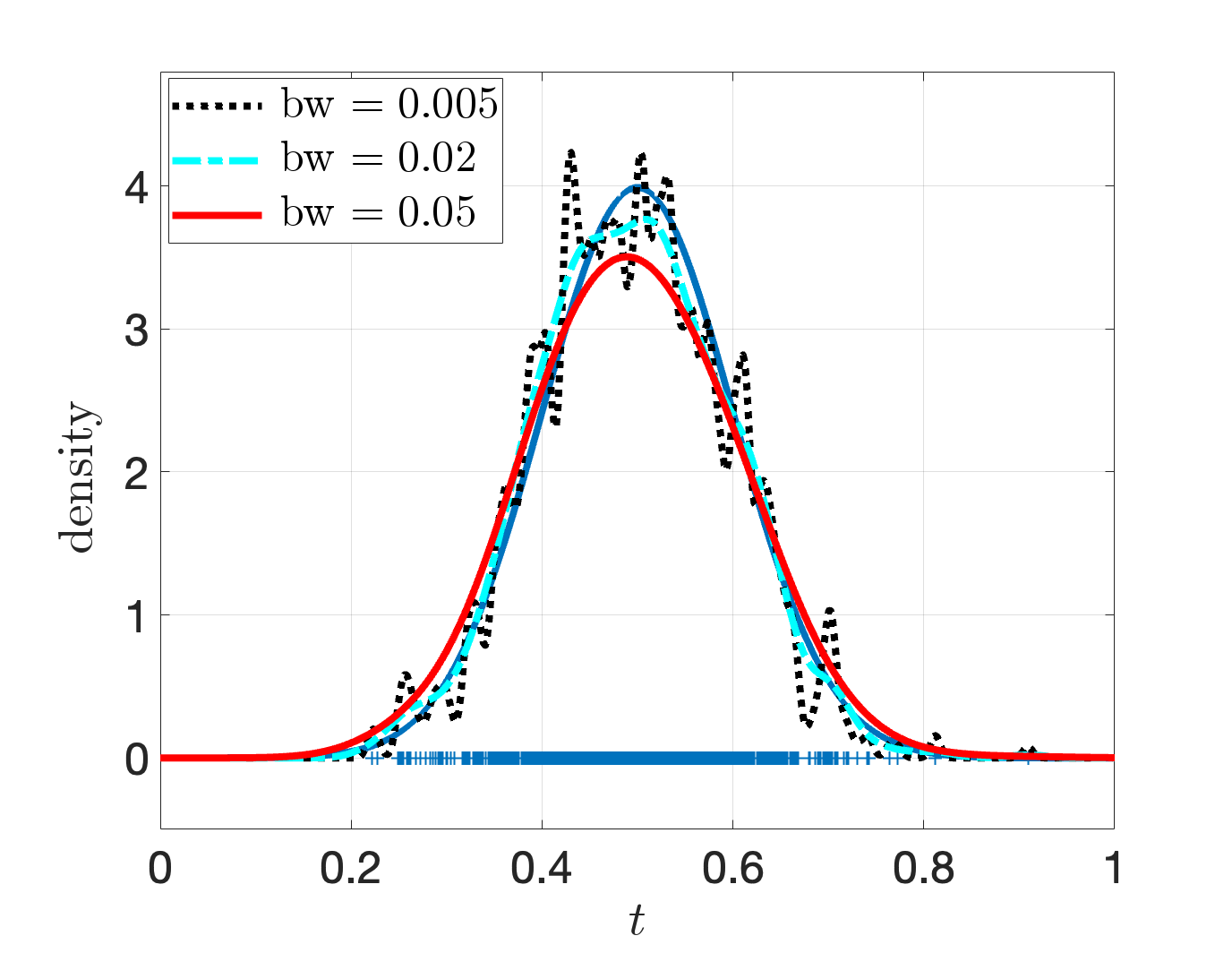} \\[3mm]
(b) 1,000 sample points
\end{center}
\end{minipage} \\[3mm]
\begin{minipage}{80mm}
\begin{center}
\includegraphics[width=80mm]{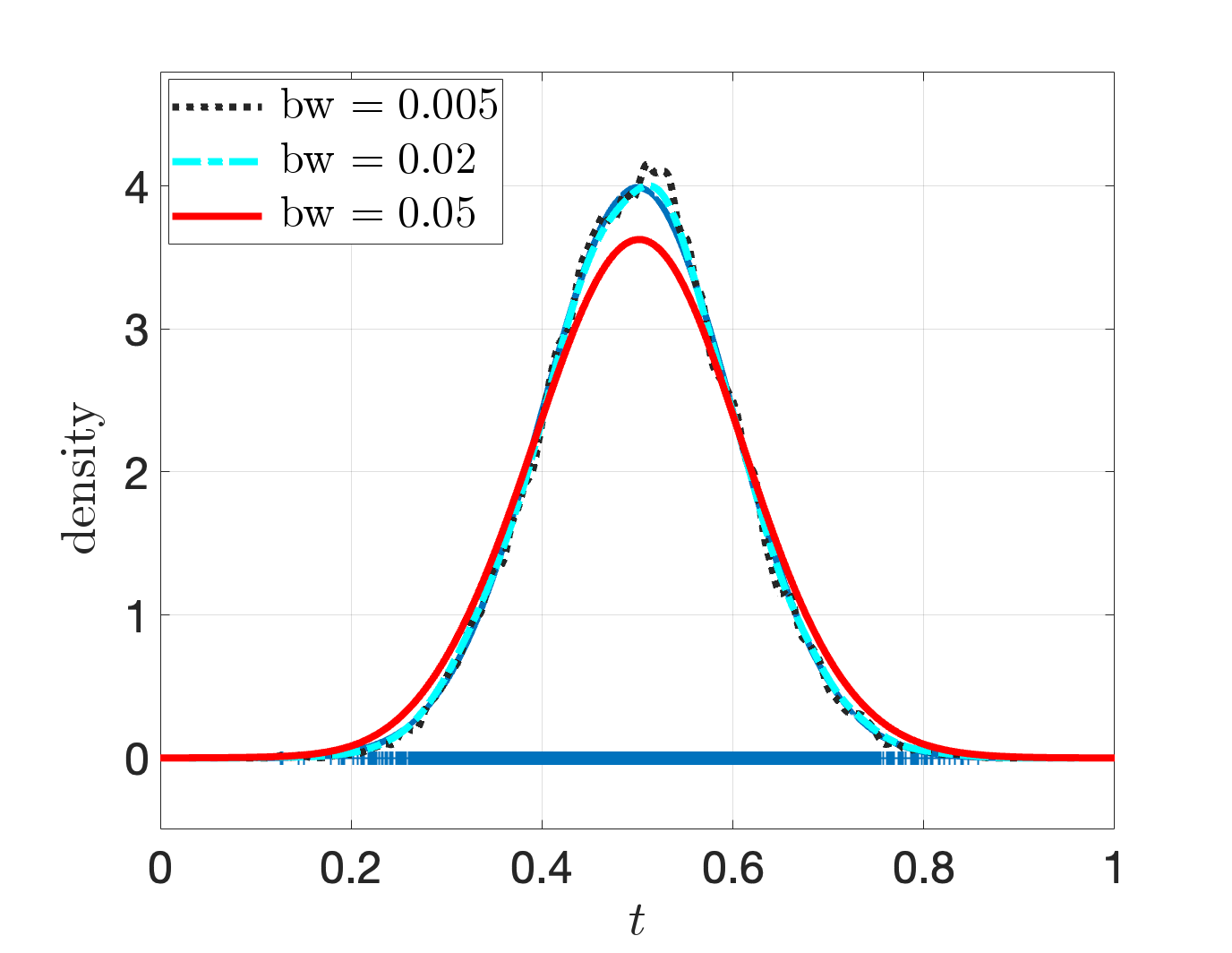} \\[3mm]
(c) 10,000 sample points
\end{center}
\end{minipage}
\begin{minipage}{80mm}
\begin{center}
\includegraphics[width=80mm]{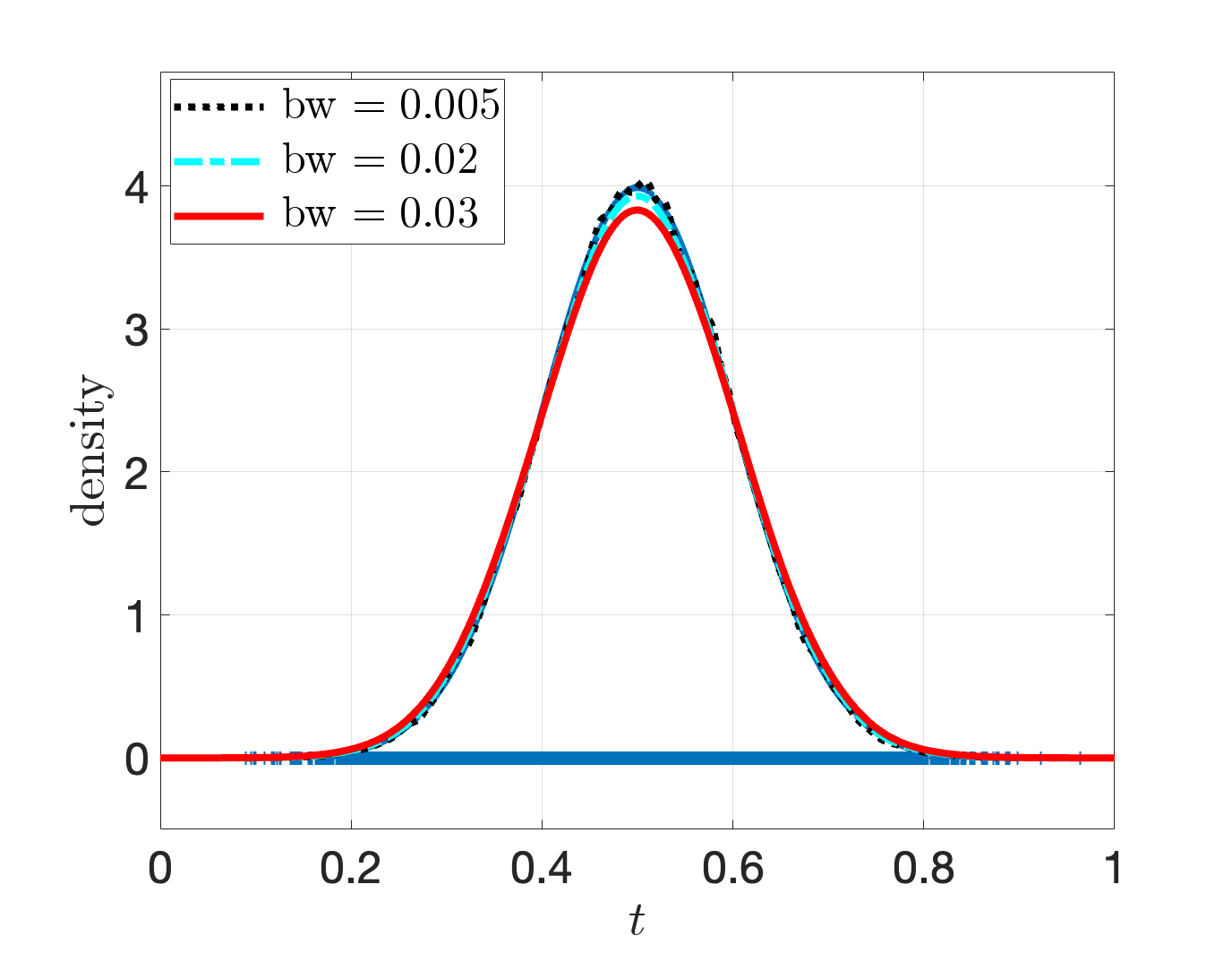} \\[3mm]
(d) 100,000 sample points
\end{center}
\end{minipage}
\caption{\sf Example~1---Normal distribution---density function estimated from various sets of sampled data using the kernel method in R, using the same data sets as in Figure~\ref{fig:normal}.}
\label{fig:normal_R}
\end{figure}

\newpage
\subsubsection{Example 2: The Old Faithful}

In Example~1, we used ``synthetic data sets''.  This time, we consider a real-life data set that contains 272 observations of the durations of eruptions, measured in minutes, of the Old Faithful geyser in Yellowstone National Park, Wyoming, the United States~\cite{AzzBow1990}.  This data set has been widely used to test the performance of density estimators~\cite{GriHeg2010, Kwasniok2021, FusHorTsu2006}; also see~\cite[Chapter~8]{HotEve2014} for the Old Faithful geyser observations of the waiting times between consecutive eruptions.

Figure~\ref{fig:OF}(a) shows the estimates of the density function of different degrees of smoothness for various values of $\alpha$ and $\beta=1$, obtained by solving \eqref{trap1}--\eqref{trap2}.  We have set $w(t) = 0$, for all $t\in[0,1]$.  Table~\ref{table:OF} lists the computational aspects of the density estimation procedure we use, as well as the optimal values of $\gamma$ corresponding to various values of $\alpha$.  As in Example~1, the ``smoothness'' in the appearance of the density function can be improved by increasing the value of $\alpha$ here.

In Figure~\ref{fig:OF}(b), we provide a histogram of the data given, which serves as a valuable reference for comparisons.  Figure~\ref{fig:OF}(c), on the other hand, shows the density functions estimated by R for various bandwidths.  Although the density function estimated by R, for example, for $\mbox{bw} = 0.3$, shows a slightly smoother appearance (by the very nature of the kernel methods), the density function in~Figure~\ref{fig:OF}(a), estimated by the optimal control approach for $\alpha = 1$, seems to represent the histogram more closely.

\begin{table}[t!]
\centering
{\small
\begin{tabular}{cccccc} \hline \\[-2mm]
$n$ & $h$ & $L$ & $\alpha$ & $\gamma$ & \hspace*{-3mm}$\begin{array}{c} \mbox{CPU time} \\ \mbox{[sec]} \end{array}$ \\[4mm] \hline \\[-3mm]
$272$ & $1/(2\times10^3)$ & 2,198 & $\begin{array}{r} 0.1 \\ 0.5 \\ 1\ \ \\ 2\ \ \end{array}$ & $\begin{array}{r} 13611.9 \\ 2730.0 \\ 1369.3 \\ 688.7 \end{array}$ & 0.14 \\[1mm] 
\hline\hline \end{tabular}
}
\caption{\sf Example~2---The Old Faithful---The setting for the Old Faithful data set and the resulting values of $\gamma$ from solving \eqref{trap1}--\eqref{trap2}.}
\label{table:OF}
\end{table}

\afterpage{\clearpage}
\begin{figure}[t!]
\begin{minipage}{80mm}
\begin{center}
\includegraphics[width=80mm]{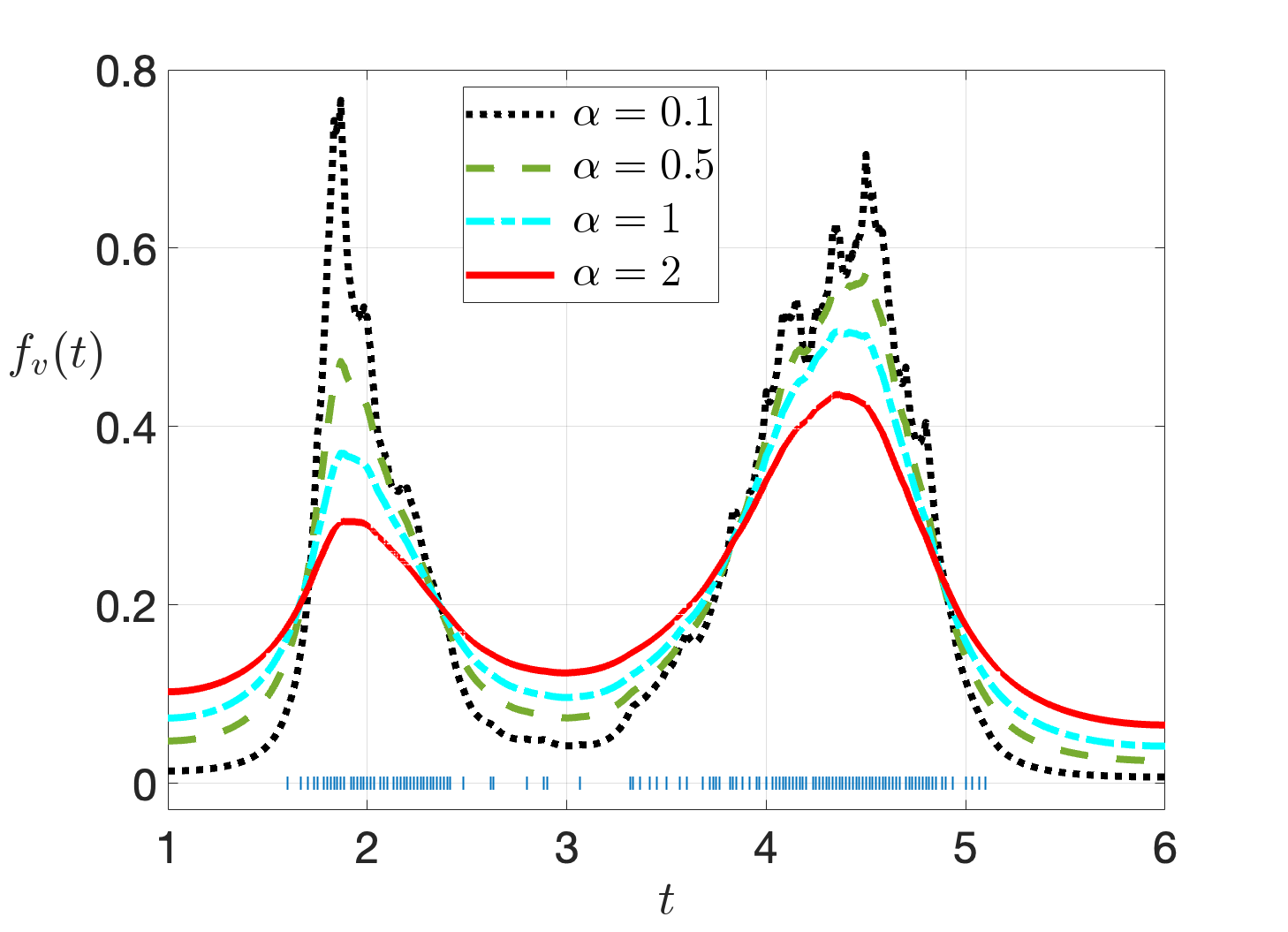}
\\[3mm] (a) Density via optimal control
\end{center}
\end{minipage}
\begin{minipage}{80mm}
\begin{center}
\includegraphics[width=80mm]{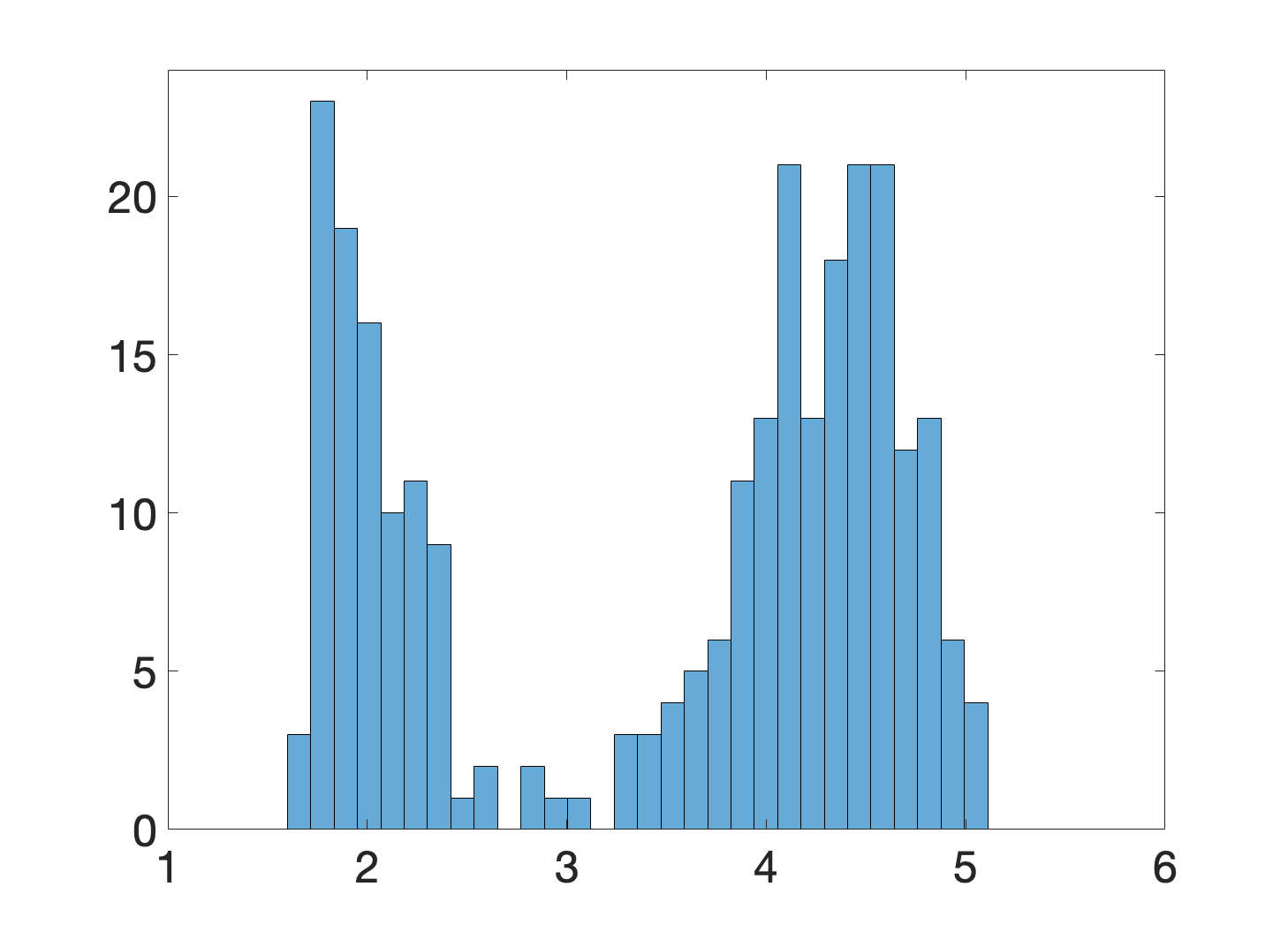} \\[3mm]
(b) Histogram
\end{center}
\end{minipage} \\[3mm]
\begin{minipage}{80mm}
\begin{center}
\includegraphics[width=80mm]{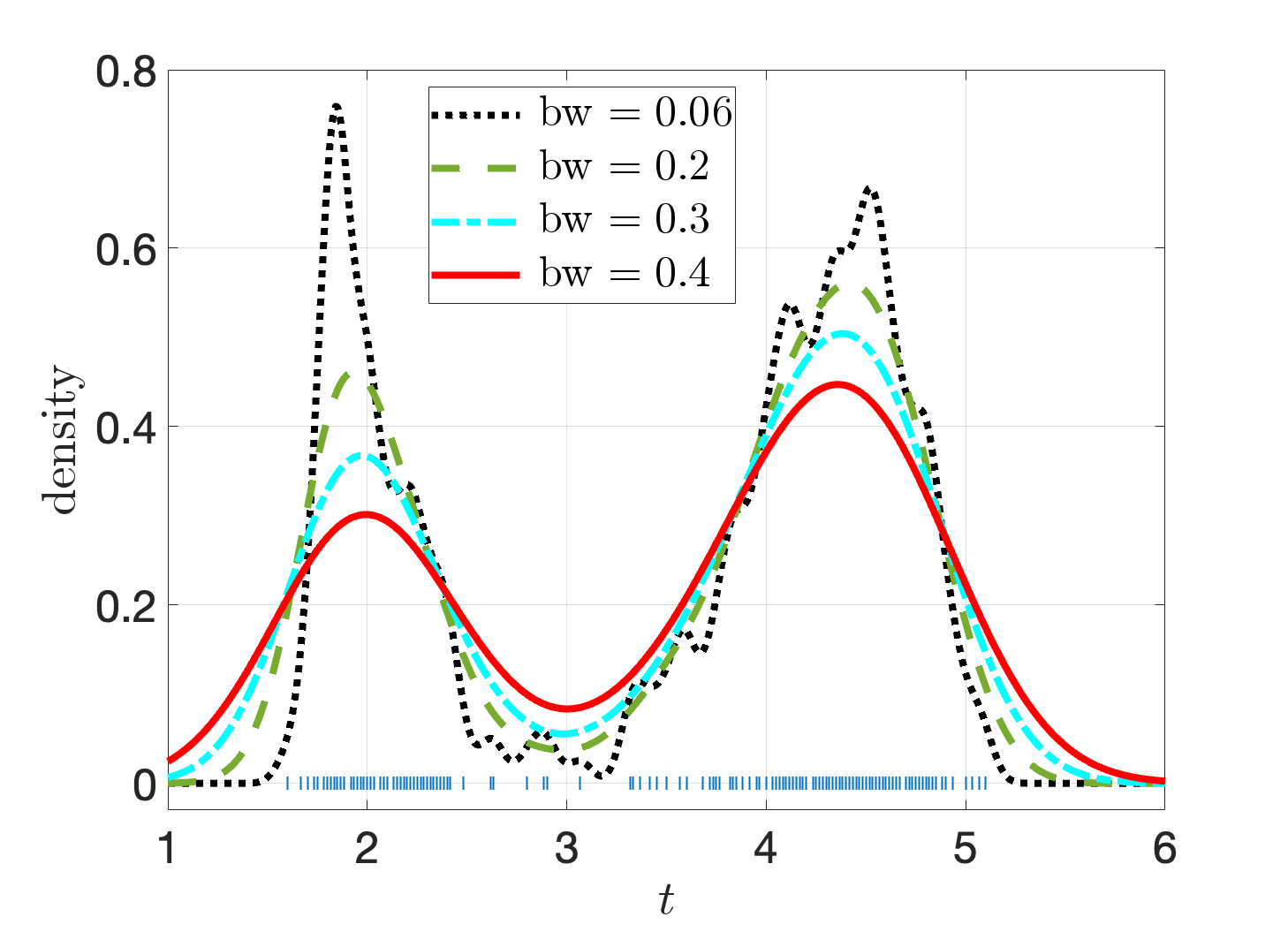} \\[3mm]
(c) Density via the kernel method in R
\end{center}
\end{minipage}
\begin{minipage}{80mm}
\end{minipage}
\caption{\sf Example~2---The Old Faithful---estimated density functions.}
\label{fig:OF}
\end{figure}

\newpage
\subsubsection{Example 3: Galaxy speeds}

Another real-life data set used to test density function estimators is the set of ``heliocentric speeds'', measured in kilometres per second, of 83 galaxies in the Corona Borealis region~\cite[Table~1]{PosHucGel1986}.  Here, we will show the speeds in the graphs in thousands of kilometres per second, for neatness of exposition.
The first attempt to estimate the density function for these data of galaxy speeds seems to have been made in~\cite[Table~1]{Roeder1990}, albeit by using the speeds of 82 of the galaxies, instead of all 83 of them, leaving out the one with the speed 5,607 km/s.  As far as we can judge, in all subsequent studies in which density function estimators have been tested using galaxy speeds (see, for example, \cite[Chapter~8]{HotEve2014} and \cite{FusHorTsu2006}), the data set of 82 galaxy speeds as given in~\cite{Roeder1990} has been used. Here, we include all 83 galaxies as given in~\cite{PosHucGel1986}.

Figure~\ref{fig:GS}(a) shows the density functions of different degrees of smoothness for various values of $\alpha$ and $\beta=1$, obtained by solving \eqref{trap1}--\eqref{trap2}.  As in Example~2, we have set $w(t) = 0$, for all $t\in[0,1]$.  In Table~\ref{table:GS}, the computational aspects of the procedure are listed, as well as the optimal values of $\gamma$ corresponding to various values of $\alpha$, as in previous examples.

In Figure~\ref{fig:GS}(b), a histogram of the data is provided, and  Figure~\ref{fig:GS}(c) shows the density functions estimated by R for various bandwidths.  Although the density function estimated by R, for example, for $\mbox{bw} = 3$, shows a smoother appearance, the density function in~Figure~\ref{fig:GS}(a), estimated by the optimal control approach for $\alpha = 1$, seems to represent the histogram more closely, especially in the middle part of the distribution.

\begin{table}[t!]
\centering
{\small
\begin{tabular}{cccccc} \hline \\[-2mm]
$n$ & $h$ & $L$ & $\alpha$ & $\gamma$ & \hspace*{-3mm}$\begin{array}{c} \mbox{CPU time} \\ \mbox{[sec]} \end{array}$ \\[4mm] \hline \\[-3mm]
$83$ & $1/(2\times10^3)$ & 2,043 & $\begin{array}{r} 0.1 \\ 0.5 \\ 1\ \ \\ 2\ \ \end{array}$ & $\begin{array}{r} 33382.6 \\ 6807.1 \\ 3480.0 \\ 1814.1 \end{array}$ & 0.15 \\[1mm] 
\hline\hline \end{tabular}
}
\caption{\sf Example~3---Galaxies---The setting for the Galaxy speeds data set and the resulting values of $\gamma$ from solving \eqref{trap1}--\eqref{trap2}.}
\label{table:GS}
\end{table}

\afterpage{\clearpage}
\begin{figure}[t!]
\begin{minipage}{80mm}
\begin{center}
\includegraphics[width=80mm]{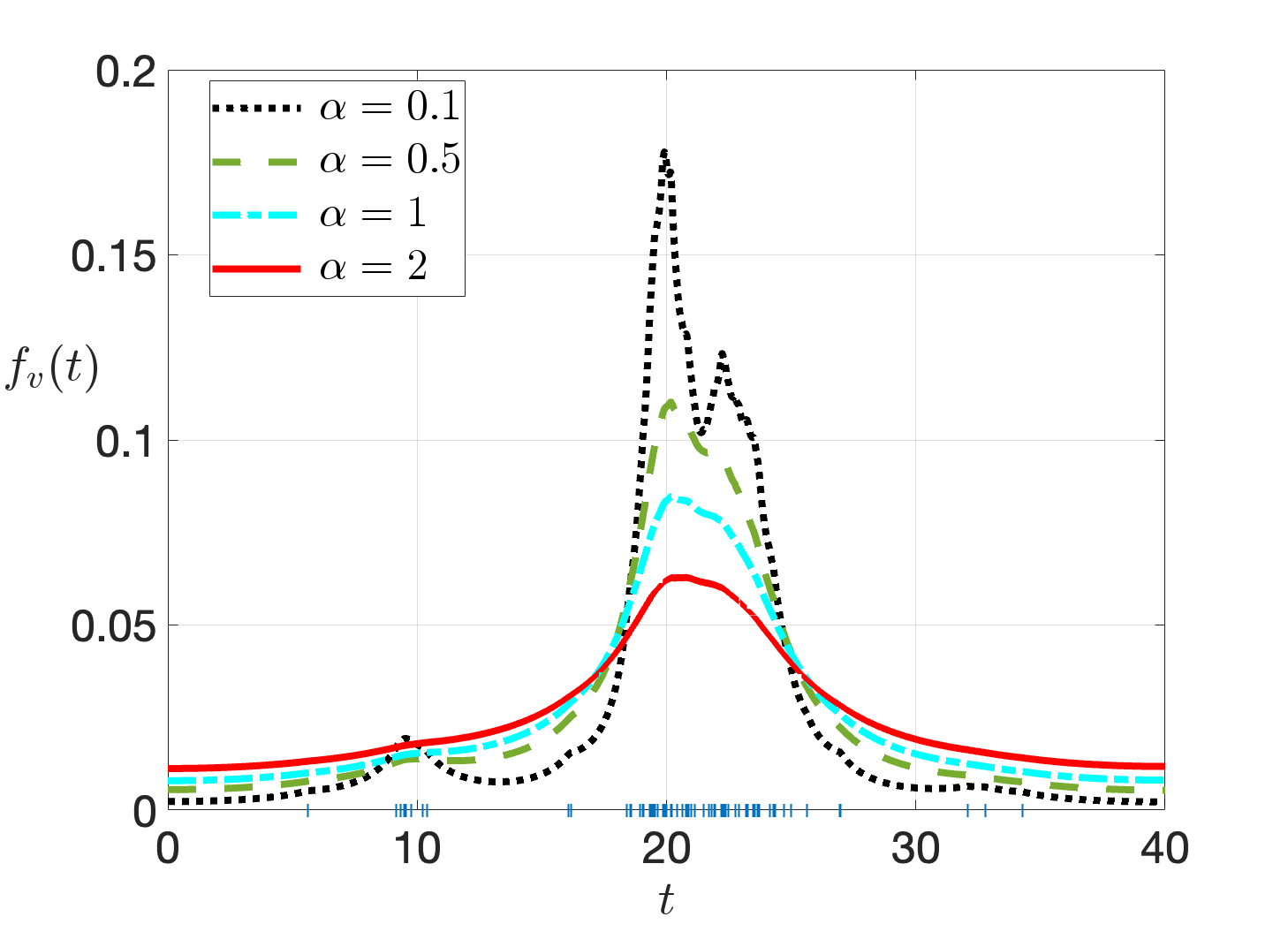}
\\[3mm] (a) Density via optimal control
\end{center}
\end{minipage}
\begin{minipage}{80mm}
\begin{center}
\includegraphics[width=80mm]{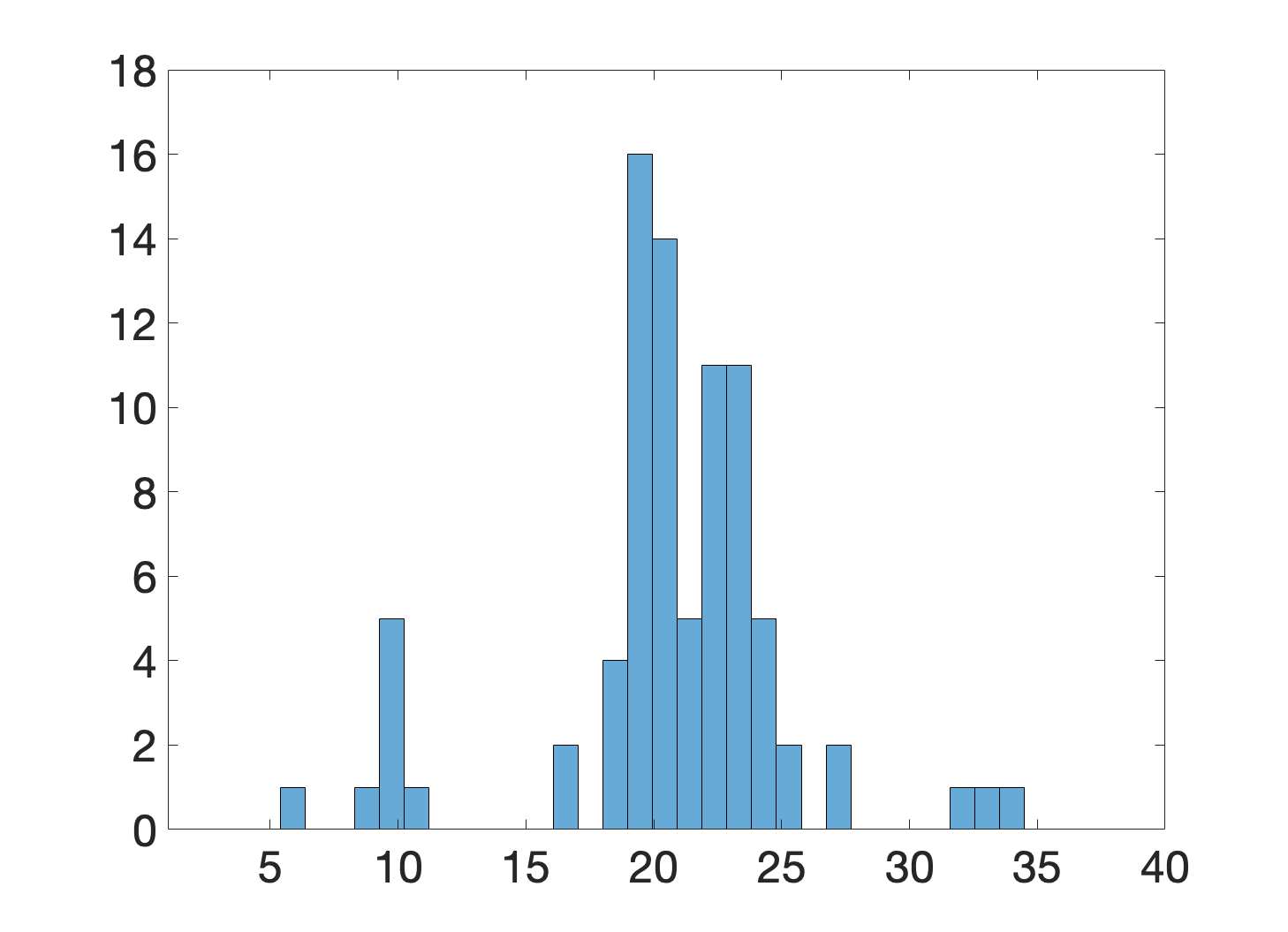} \\[3mm]
(b) Histogram
\end{center}
\end{minipage} \\[3mm]
\begin{minipage}{80mm}
\begin{center}
\includegraphics[width=80mm]{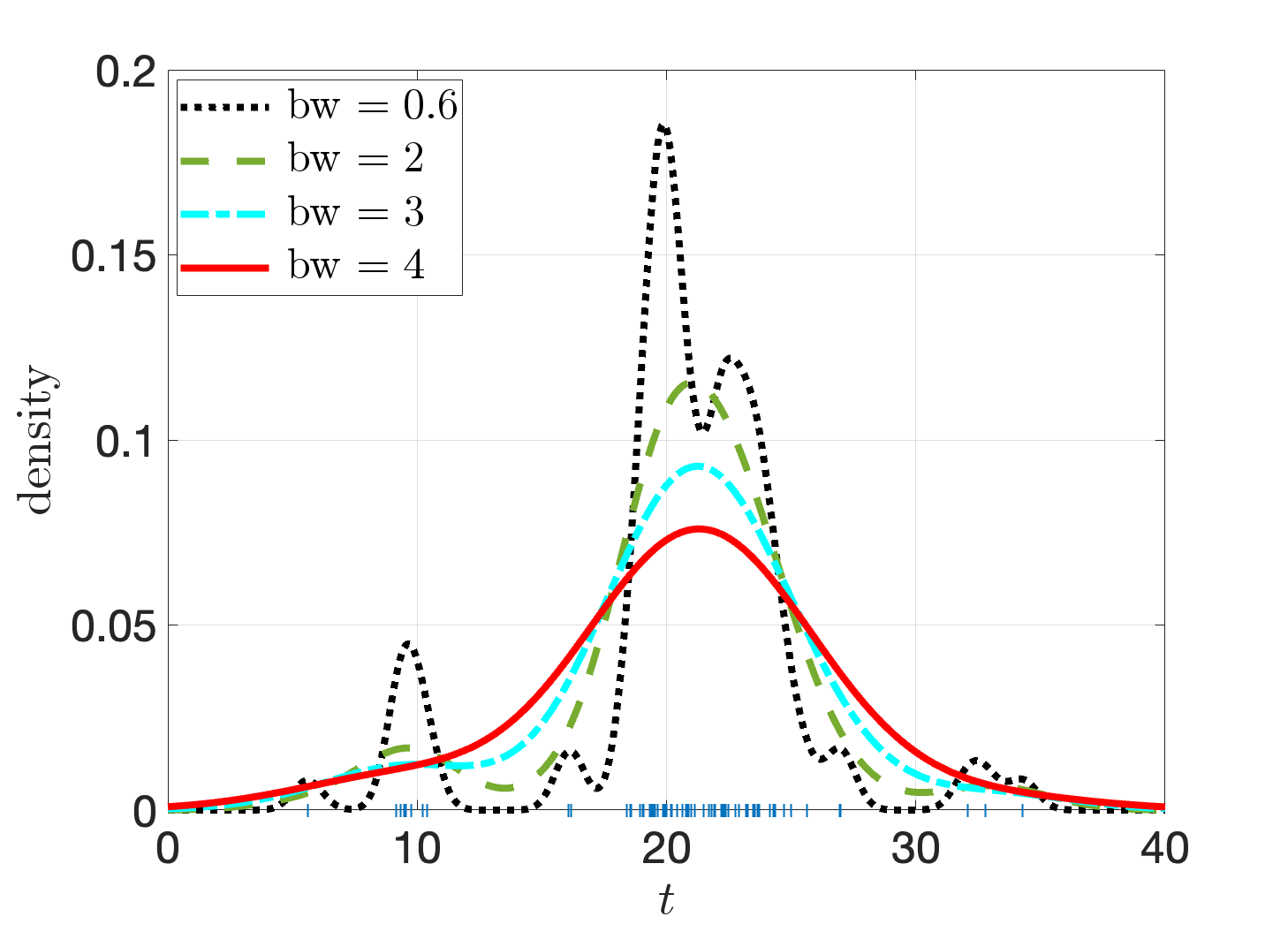} \\[3mm]
(c) Density via the kernel method in R
\end{center}
\end{minipage}
\begin{minipage}{80mm}
\end{minipage}
\caption{\sf Example~3---Galaxies---estimated density functions.}
\label{fig:GS}
\end{figure}

\section{Conclusion}
\label{conclusion}
We have reformulated the density estimation problem as a multi-stage optimal control problem.  We proposed a new numerical approach to solve the two-point boundary-value problem emanating from the maximum principle and obtain an estimate of the probability density function.  We demonstrated the working of the numerical method on example data sets.

It would be interesting to consider Problem~(P) with a more general $f$ rather than a specific $f(t) = e^{v(t)}$.  It would also be interesting to impose simple bounds on the ``control'' $\dot{f}(t)$, as was done without the additional regularization terms in~\cite{Shvartsman2010}.  Adding additional constraints involving $f$ would make Problem~(P) more challenging to tackle both theoretically and numerically.

It would also be interesting to consider additional constraints such as those on moments, quantiles, and entropy, as discussed in~\cite{HalPre1999}.  Optimal control theory and computations are particularly well-known to handle constraints well compared to classical calculus of variations formulations.

\section*{Acknowledgments}
The second-named author is grateful to John Hinde for pointing out the data set of galaxy speeds for density estimation.

{\small

}

\end{document}